\renewcommand{\cite}{\citet}
\newcommand{\dhat}[1]{\tilde{#1}}
\algrenewcommand{\algorithmicrequire}{\textbf{Input:}}
\algrenewcommand{\algorithmicensure}{\textbf{Output:}}
\newcommand{\stsets}[1]{\mathbb{#1}}
\newcommand{\A}{\stsets{A}}
\newcommand{\R}{\stsets{R}}
\renewcommand{\L}{\stsets{L}}
\newcommand{\M}{\stsets{M}}
\newcommand{\N}{\stsets{N}}
\newcommand{\Z}{\stsets{Z}}
\newcommand{\T}{\stsets{T}}
\newtheorem{theorem}{Theorem}
\renewcommand{\P}{\mathbf{P}}
\DeclareMathOperator{\E}{{\bf E}}
\DeclareMathOperator{\one}{{ 1\hspace*{-0.55ex}I}}
\newcommand{\cond}{\hspace*{1ex}
  \rule[-1ex]{0.15ex}{3ex}\hspace*{1ex}}
\newcommand{\bN}{\boldsymbol{N}}
\newcommand{\Exp}{\ensuremath{\mathsf{Exp}}}
\newcommand{\Po}{\ensuremath{\mathsf{Po}}}
\newcommand{\dtv}{d_{\mathrm{TV}}}
\newcommand{\thru}{,\dotsc,}
\renewcommand{\epsilon}{\varepsilon}
\renewcommand{\phi}{\varphi}
\newcommand{\rd}{\mathrm{d}} %differential d
\newcommand{\lchf}{L_{\mathrm{ChF}}}
\newcommand{\lcof}{L_{\mathrm{CoF}}}
\newlength{\querylen}
\begin{document}
\title{Nonparametric estimation of infinitely divisible distributions based on
  variational analysis on measures} 

\author{Alexey Lindo\thanks{Department of Mathematical Sciences,
    Chalmers University of Technology and University of Gothenburg,
    Sweden. Email: 
    \texttt{sergei.zuev@chalmers.se}} \and
  \addtocounter{footnote}{-1}Sergei Zuyev\footnotemark \and
  \addtocounter{footnote}{-1}Serik Sagitov\footnotemark} 

\date{\today}
\maketitle

\begin{abstract}
The paper develops new methods of non-parametric estimation a compound Poisson distribution. Such a problem arise, in particular, in the inference of a L\'evy process recorded at equidistant time intervals. Our key estimator is based on series decomposition of functionals of a measure and relies on the steepest descent technique recently developed in variational analysis of measures. Simulation studies demonstrate applicability domain of our methods and how they positively compare and complement the existing techniques. They are particularly suited for discrete compounding distributions, not necessarily concentrated on a grid nor on the positive or negative semi-axis. They also give good results for continuous distributions provided an appropriate smoothing is used for the obtained atomic measure.

 % Our methods could be used to derive new
  % estimates for real datasets, in particular, stock market data which
  % are commonly assumed to come from realisation of a L\'evy process.

  \medskip \textbf{Keywords:} Compound Poisson distribution, L\'evy
  process, decompounding, measure optimisation, gradient methods

  \medskip
  \textbf{AMS 2010 Subject Classification.} Primary: 62G05, Secondary:
  62M05,  65C60
\end{abstract}

\section{Introduction}\label{sec:intro}
The paper develops new methods of non-parametric estimation of the
distribution of compound Poisson data. Such data naturally arise
in the inference of a \emph{L\'evy process} which is a stochastic
process $(W_{t})_{t\ge0}$ with $W_0=0$ and time homogeneous independent
increments. Its characteristic function necessarily has the
form $Ee^{i\theta W_t}=e^{t\psi(\theta)}$ with
\begin{equation}\label{eq:LK}
  \psi(\theta)= 
    ia\theta-\frac{\sigma^2\theta^2}{2} + \int (e^{i\theta
      x}-1-i\theta x\one_{\{|x|<\epsilon\}}) \Lambda(\rd x),
\end{equation}
where $\epsilon>0$ is a fixed positive number, $a\in\R$ is a
\emph{drift} parameter, $\sigma^2\in[0,\infty)$ is the \emph{variance}
of the \emph{Brownian motion} component, and the so-called
\emph{L\'evy measure} $\Lambda $ satisfying
\begin{equation}\label{eq:levy_constraints}
  \Lambda(\{0\})=0,\quad \int \min\{1,x^2\}\, \Lambda(\rd x) < \infty.
\end{equation}
Here and below the integrals are taken over the whole $\R$ unless specified otherwise.
In a special case with $\sigma=0$ and
$\int _{(-\epsilon,\epsilon)}|x| \Lambda(\rd x) < \infty$, we get a \emph{pure jump} L\'evy
process characterised by
\begin{equation}\label{La}
\psi(\theta)= \int (e^{i\theta x}-1) \Lambda(\rd x),
\end{equation}
or equivalently,
\begin{displaymath}
  \psi(\theta)= 
  ia\theta+ \int  (e^{i\theta x}-1-i\theta x\one_{|x|<\epsilon})
  \Lambda(\rd x),\quad a=\int_{(-\epsilon,\epsilon)} x\Lambda(\rd x). 
\end{displaymath}
In an even more restrictive case with a finite total mass
$\|\Lambda\|:=\Lambda(\R)$, the L\'evy process becomes a \emph{compound
Poisson process} with times of jumps being a Poisson process with
intensity $\|\Lambda\|$, and the jump sizes being independent random
variables with distribution $\|\Lambda\|^{-1}\Lambda(dx)$. Details can
be found, for instance, in~\cite{SatoK1999}.

Suppose the L\'evy process is observed at regularly spaced times
producing a random vector $(W_0, W_h,W_{2 h},\ldots,W_{n h})$ for some
time step $h>0$. The consecutive increments $X_i=W_{i h}-W_{(i-1) h}$
then form a vector $(X_{1}, \ldots X_{n})$ of independent random
variables having a common infinitely divisible distribution with the
characteristic function $\phi(\theta)=e^{h\psi(\theta)}$, and thus can be used to
estimate the distributional triplet $(a,\sigma,\Lambda)$ of the
process.  Such inference problem naturally arises in in financial
mathematics~\cite{Cont2003}, queueing theory~\cite{Asmussen2008},
insurance~\cite{Mikosch2009} and in many other situations, where
L\'{e}vy processes are used.

By the L\'{e}vy-It\^{o} representation theorem~\cite{SatoK1999}, every
L\'{e}vy process is a superposition of a Brownian motion with drift
and a square integrable pure jump martingale. The latter can be
further decomposed into a pure jump martingale with the jumps not
exceeding in absolute value a positive constant $\epsilon$ and a
compound Poisson process with jumps $\epsilon$ or above. In practice,
only a finite increment sample $(X_1\thru X_n)$ is available, so there
is no way to distinguish between the small jumps and the Brownian
continuous part. Therefore one usually chooses a threshold level
$\epsilon>0$ and attributes all the small jumps to the Brownian
component, while the large jumps are attributed to the compound
Poisson process component (see, e.g.~\cite{AsmRos:01} for an account
of subtleties involved).

Provided an estimation of the continuous and the small jump part is
done, it remains to estimate the part of the L\'evy measure outside of the
interval $(-\epsilon,\epsilon)$. Since this corresponds to the
compound Poisson case, estimation of such $\Lambda$ is usually 
called \emph{decompounding} which is the main object of study in this paper.

Previously developed methods include discrete decompounding approach
based on the inversion of Panjer recursions as proposed
in~\cite{Buchman2003}.  \cite{Comte2014},
\cite{Duval2013} and \cite{Es2007} studied the continuous
decompounding problem when the measure $\Lambda$ is assumed to have a
density. They apply Fourier inversion in combination with kernel
smoothing techniques for estimating an unknown density of the L\'evy
measure. In contrast, we do not distinguish between discrete and
continuous $\Lambda$ in that our algorithms based on direct
optimisation of functionals of a measure work for both situations on a
discretised phase space of $\Lambda$. However, if one sees many small
atoms appearing in the solution which fill a thin grid, this may
indicate that the true measure is absolutely continuous and some kind of
smoothing should yield its density.

Specifically, we propose a combination of two non-parametric methods for estimation
of the L\'evy measure which we call Characteristic Function Fitting
(ChF) and Convolution Fitting (CoF). ChF
deals with a general class of L\'evy processes, while CoF more
specifically targets the pure jump L\'evy process characterised by
\eqref{La}.

The most straightforward approach is to use the moments fitting,
see \cite{FeuMcD:81b} and \cite{CarFlo:00}, or the empirical distribution function
 \begin{equation*}
\hat  F_{n}(x) = \frac{1}{n} \sum_{k = 1}^{n} \one_ {\{X_{k} \le  x\}}
\end{equation*}
to infer about the triplet $(a, \sigma, \Lambda)$. Estimates can be
obtained by maximising the likelihood ratio (see, e.g.\
\cite{QinLaw:94}) or by minimising some measure of proximity between
$F(x)$ and $\hat{F}_{n}(x)$,
% the loss function 
% \begin{equation} \label{eq:L2}
%   L(a, \sigma, \Lambda) = \int [ F(x) - \hat{F}_{n}(x) ]^{2} \omega(\rd x), 
% \end{equation}
where the dependence on $(a, \sigma, \Lambda)$ comes through $F$
via the inversion formula of the characteristic function:
\begin{displaymath} %\label{eq:invers}
  F(x) - F(x - 0) = \frac{1}{2 \pi} \lim_{y \to \infty} \int_{-y}^{y}
  \exp\{h\psi(\theta)-i \theta x\} \rd \theta.
\end{displaymath}
For the estimation, the characteristic function in the integral above
is replaced by the empirical characteristic function:
\begin{equation*}
  \hat\phi_{n}(\theta) = \frac{1}{n} \sum_{k = 1}^{n} e^{i \theta X_{k}}.
\end{equation*}
Algorithms based on the inversion of the empirical characteristic function and on the relation
between its derivatives were proposed in~\cite{WatKul:03}. For a
comparison between different estimation methods, see a recent survey
\cite{SueNis:05}. Note that inversion of the empirical characteristic function, in
contrast to the inversion of its theoretical counterpart, generally
leads to a complex valued measure which needs to be dealt with.

Instead, equipped with the new theoretical and numeric optimisation
methods developed recently for functionals of measures
(see~\cite{MolchanovI2002} and the references therein), we use the
empirical characteristic function directly: the ChF estimator for the
compounding measure $\Lambda$ or, more generally, of the whole triplet
$(a, \sigma, \Lambda)$ may be obtained by minimisation of the loss
functional
\begin{equation}\label{eq:lchf}
  \lchf(a, \sigma,\Lambda) = \int |e^{ h\psi(\theta)} -
  \hat\phi_{n}(\theta) |^{2}  \omega(\theta)\rd \theta, 
\end{equation}
where $\psi(\theta)\equiv\psi(\theta; a, \sigma,\Lambda) $ is given by
\eqref{eq:LK} and $\omega(\theta)$ is a weight function. Typically
$\omega(\theta)$ is a positive constant for
$\theta\in[\theta_1,\theta_2]$ and zero otherwise, but it can also be
chosen to grow as $\theta \to 0$, this would emphasise a better
agreement with the estimated distribution for smaller jumps.

\emph{Parametric} inference procedures based on the empirical characteristic function has been
known for some time, see, e.g., \cite{FeuMcD:81a} and the references
therein. Our main contribution is that we compute explicitly the
derivative of the loss functional~\eqref{eq:lchf} with respect to the \emph{measure}
$\Lambda$ and perform the steepest descent directly on the cone of
non-negative measures to a local minimiser.  It must be noted that, as
a simple example reveal, the functionals based on the empirical characteristic function usually
have a very irregular structure, see Figure~\ref{fig:irregular}. As a
result, the steepest descent often fails to attend the global optimal
solution, unless the starting point of the optimisation procedure is
carefully chosen.
\begin{figure}[H]
\begin{subfigure}{.5\textwidth}
  \centering
  \includegraphics[width=.95\linewidth]{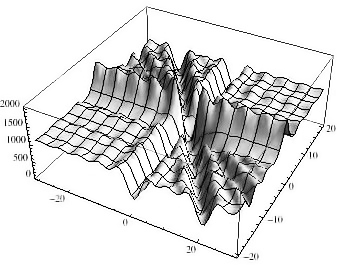}
\end{subfigure}%
\begin{subfigure}{.5\textwidth}
  \centering
  \includegraphics[width=.95\linewidth]{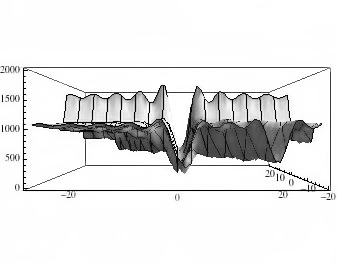}
%\caption{First Method}
\end{subfigure}
\caption{Illustration of intrinsic difficulties faced by any
  characteristic function fitting procedure. Suppose one knows the
  data are coming from a shifted Poisson distribution (i.e.\
  $\sigma=0$ and the L\'evy measure has the form
  $\Lambda=x\delta_{1}$). Assume now a big data sample is actually
  coming from a Poisson $\Po(1)$ distribution so that $\hat{\phi}_n$
  in \eqref{eq:lchf} is very close to
  $\exp\{e^{i\theta}-1)\}$. Plotted are the values of~\eqref{eq:lchf}
  with shift parameter $-20\leq a\leq 20$ and $0\leq x\leq20$. It is
  seen that any optimisation algorithm would have difficulties
  converging to the global minimum $a=0,\ x=1$ even in this simple two
  parameter case.
  \label{fig:irregular}}
\end{figure}

In contrast, the proposed CoF estimation method is not using the
empirical characteristic function and is based on Theorem~\ref{Main} below which presents the
convolution
\begin{displaymath}
 F^{*2}(x) = \int F(u) F(y- u) du
\end{displaymath}
as a functional of $\Lambda$. It has an explicit form of an infinite
Taylor series in direct products of $\Lambda$, but truncating it to
only the first $k$ terms we build a loss function $\lcof^{(k)}$ by
comparing two estimates of $F^{*2}$: the one based on the truncated
series and the other being the empirical convolution
$F^{2*}_{n}$. CoF is able to produce nearly optimal estimates
$\hat \Lambda _{k}$ when large values of $k$ are taken, but this also
drastically increases the computation time.

A practical combination of these methods recommended by this paper is
to find $\hat \Lambda _{k}$ using CoF with a low value of $k$, and
then apply ChF with $\hat \Lambda _{k}$ as the starting value. The
estimate for such a two-step procedure will be denoted by
$\dhat{\Lambda}_{k}$ in the sequel.

To give an early impression of our approach, let us demonstrate the
performance of the ChF methods on the famous data by Ladislaus Bortkiewicz
who collected the numbers of Prussian soldiers killed by a horse kick
in 10 cavalry corps over a 20 years period \cite{Bort:98}. The counts
0, 1, 2, 3, and 4 were observed 109, 65, 22, 3 and 1 times, with
0.6100 deaths per year and cavalry unit. The author argues that the
data are Poisson distributed which corresponds to the measure
$\Lambda$ concentrated on the point $\{1\}$ (only jumps of size 1) and
the mass being the parameter of the Poisson distribution which is then
estimated by the sample mean 0.61. Figure~\ref{horse} on its left panel presents
the estimated L\'evy measures for the cutoff values $k=1,2,3$ when
using CoF method. For the values of $k=1,2$, the result is a measure having many
atoms. This is explained by the fact that the accuracy of the
convolution approximation is not enough for this data, but $k=3$
already results in a measure $\hat\Lambda_3$ essentially
concentrated at $\{1\}$ thus supporting the Poisson model with
parameter $\|\hat\Lambda_3\|=0.6098$. In Section~\ref{Ssd} we return
to this example and explain why the choice of $k=3$ is reasonable.%  (see also
% Remark~\ref{rem:accuracy} below).
We observed that the convergence of the ChF method % in this case
depends critically on the choice of the initial measure, especially on
its total mass. However, the proposed combination of CoF followed by
ChF demonstrates (the right plot) that this two-step (faster)
procedure results in the estimate $\dhat{\Lambda}_{1}$ which is as
good as $\hat\Lambda_3$.

The rest of the paper has the following structure.  Section~\ref{Sopt}
introduces the theoretical basis of our approach -- a constraint optimisation
technique in the space of measures.  In Section~\ref{SM1} we perform
analytic calculations of the gradient of the loss functionals needed
for the implementation of ChF.  Section~\ref{SM3} develops
the necessary ingredients for the CoF method and proves the main
analytical result of the paper, Theorem~\ref{Main}.
In Section~\ref{sec:implement} we give some details on the implementation
of our algorithms in R-language.  Section~\ref{Ssd}
contains a broad range of simulation results illustrating
performance of our algorithms. We conclude by
Section~\ref{sec:discussion}, where we summarise our approach and give
some practical recommendations.

\begin{figure}[H]
\begin{subfigure}{.5\textwidth}
  \centering
  \includegraphics[width=.95\linewidth]{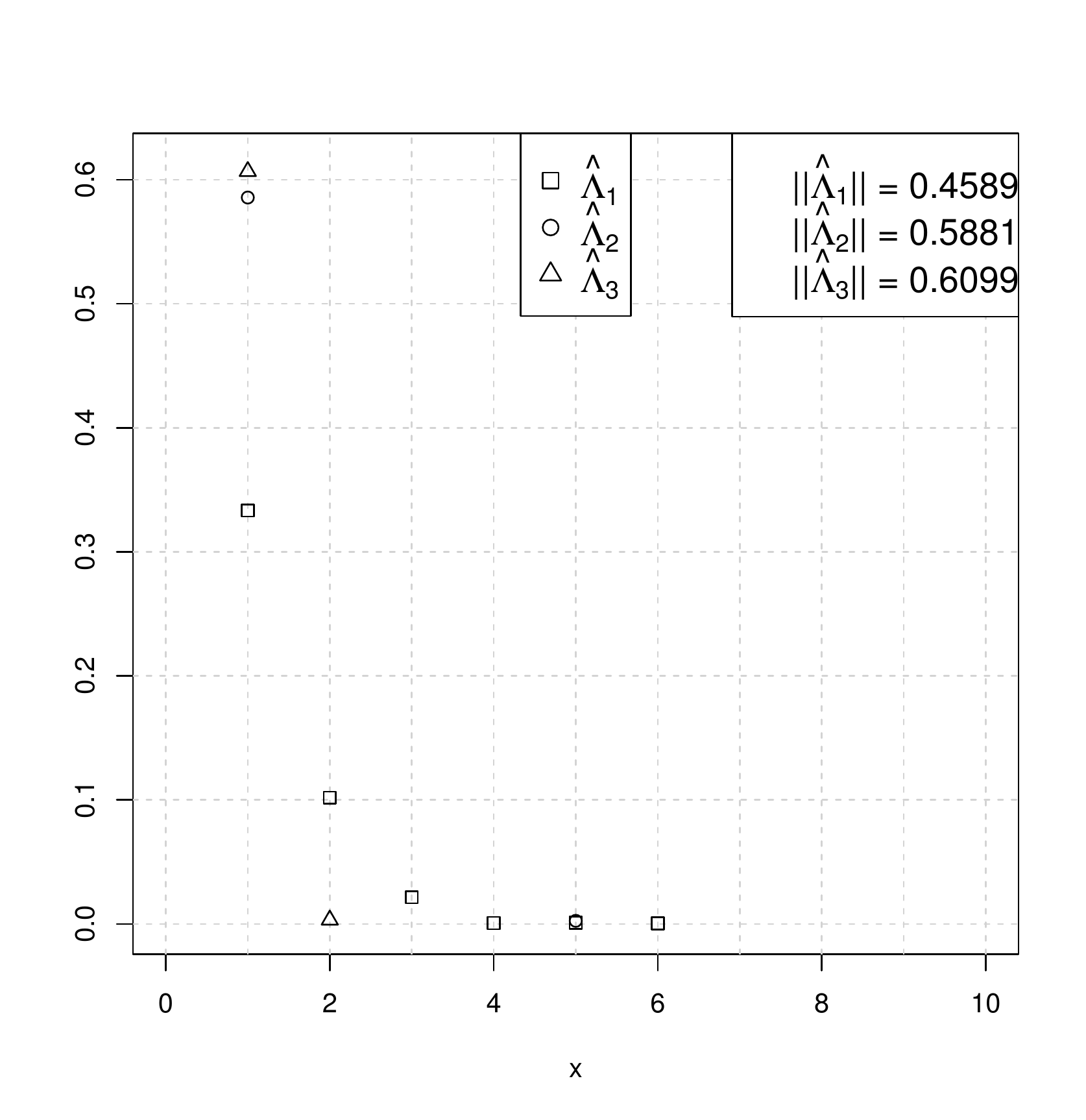}
\end{subfigure}%
\begin{subfigure}{.5\textwidth}
  \centering
  \includegraphics[width=.95\linewidth]{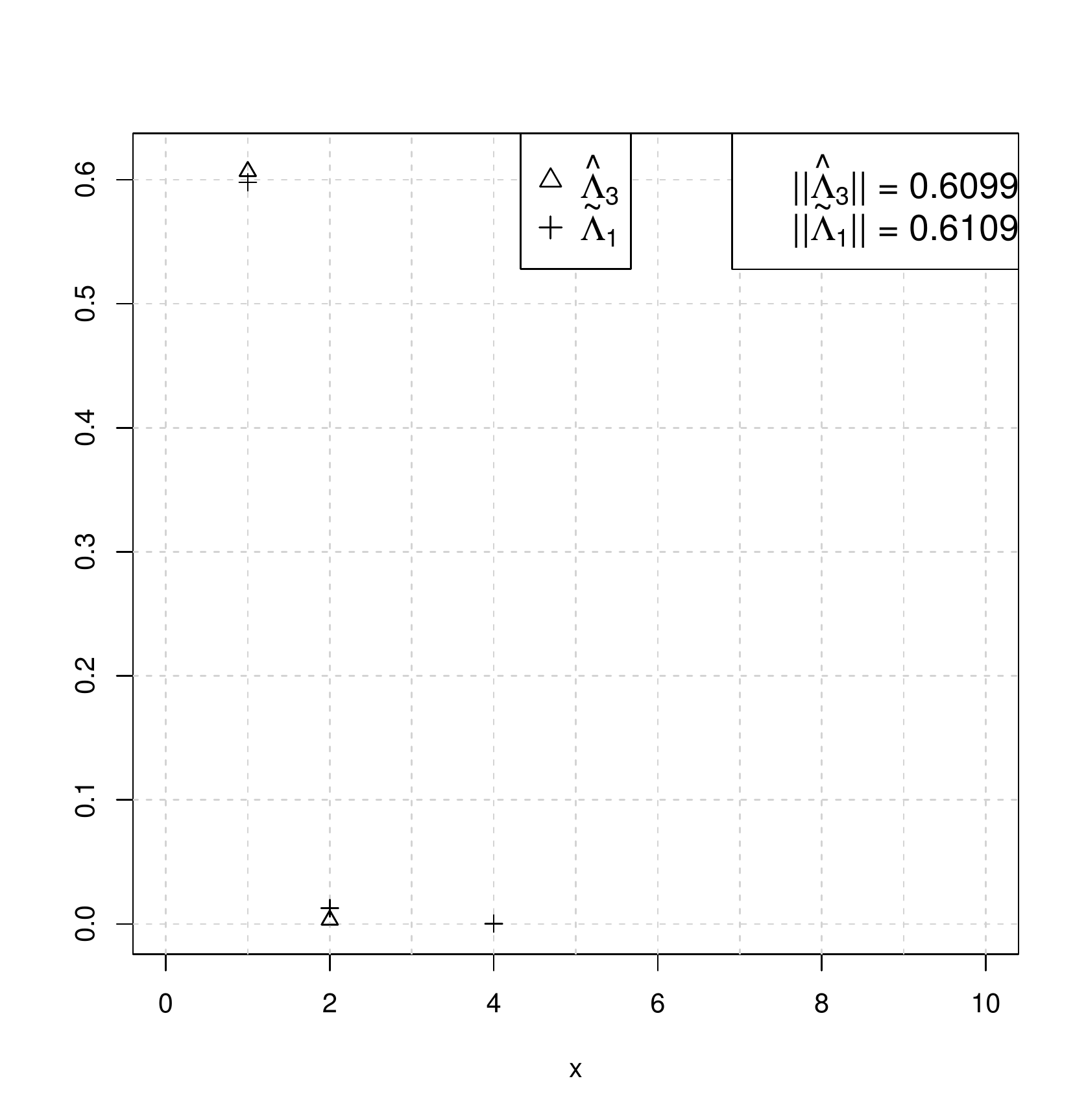}
%\caption{First Method}
\end{subfigure}
\caption{The Bortkiewicz horse kick data. Left panel: comparison of
  CoF estimates for $k=1,2,3$. Right panel: comparison of the estimate
  by CoF with $k=3$ and a combination of  CoF with $k=1$ followed by
  ChF. } 
\label{horse}
\end{figure}

\section{Optimisation in the space of measures.}
\label{Sopt}

In this section we briefly present the main ingredients of the
constrained optimisation of functionals of a measure. Further details
can be found in \cite{MolchanovI2000a} and \cite{MolchanovI2000}. 

In this paper we are dealing with measures defined on the Borel
subsets of $\mathbb{R}$. Recall that any \emph{signed} measure $\eta$ can be
represented in terms of its Jordan decomposition:
$\eta = \eta^{+} + \eta^{-}$, where $\eta^{+}$ and $\eta^{-}$ are
orthogonal non-negative measures. The total variation norm is then
defined to be $\|\eta\| = \eta^{+}(\R) + \eta^{-}(\R)$. Denote by $\M$
and $\M_+$ the class of signed, respectively, non-negative measures
with a finite total variation. The set $\M$ then becomes a Banach
space with sum and multiplication by real numbers defined set-wise:
$(\eta_1+\eta_2)(B):=\eta_1(B)+\eta_2(B)$ and $(t\eta)(B):=t\eta(B)$
for any Borel set $B$ and any real $t$. The set $\M_+$ is a
\emph{pointed cone} in $\M$ meaning that the zero measure is in $\M_+$
and that $\mu_1+\mu_2\in\M_+$ and $t\mu\in\M_+$ as long as
$\mu_1,\mu_2,\mu\in\M_+$ and $t\geq 0$.

A functional $G:\M\mapsto \R$ is called \emph{Fr\'echet} or
\emph{strongly differentiable} at $\eta\in\M$ if there exists a
bounded linear operator (a \emph{differential}) $DG(\eta)[\cdot]:\ \M\mapsto \R$ such that 
\begin{equation}
\label{eq:defdif}
  G(\eta+\nu)-G(\eta)=DG(\eta)[\nu]+o(\|\nu\|)\ \nu\in\M.  
\end{equation}
If for a given $\eta\in\M$ there exists a bounded function $\nabla
G(\,\cdot\,;\eta)\,:\,\mathbb{R}\to \mathbb{R}$ such that
\begin{displaymath}
  DG(\eta)[\nu]=\int \nabla G(x;\eta)\,\nu(\rd x)\ \text{for all}\ \nu\in\M,
\end{displaymath}
then such $\nabla G(x;\eta)$ is called the \emph{gradient function} for $G$ at
$\eta$. Typically, and it is the case for the functionals of measure we
consider here, the gradient function does exist so that the
differentials do have an integral form.

For example, an integral of a bounded function
$G(\eta)=\int f(x)\eta(dx)$ is already a bounded linear functional of
$\eta$ so that $\nabla G(x;\eta)=f(x)$ for any $\eta$. More generally, for
a composition $G(\eta)=v(\int f(x)\eta(\rd x))$, where $v$ is a
differentiable function, the gradient function can be obtained by the
Chain rule:
\begin{equation}
  \label{eq:chain}
  \nabla G(x;\eta)=v'\Bigl(\int f(y)\,\eta(\rd y)\Bigr)\,f(x).
\end{equation}
The functional $G$ for this example is strongly differentiable if both functions
$v'$ and $f$ are bounded.

The estimation methods we develop here are based on minimisation of
various loss functions over the class of possible L\'evy measures with
a finite mass. Specifically, we consider minimisation of a strongly
differentiable functional
\begin{equation}
  \label{eq:1}
  L(\Lambda)\to \min\quad \text{subject to}\ \Lambda\in\M_+,\
  H(\Lambda)\in C,
\end{equation}
where the last constraint singles out the set of L\'evy
measures, i.e.\ the measures
satisfying~\eqref{eq:levy_constraints}. This corresponds to taking
$C=\{0\}\times\R$ being a cone in $\R^2$ and
\begin{equation}
  \label{eq:2}
  H(\Lambda)=\Bigl(\Lambda(\{0\}),\int \min\{1,x^2\}\,\Lambda(\rd x)\Bigr).
\end{equation}

\begin{theorem}
  \label{Th:NecessaryConditions}
  Suppose $L\,:\,\mathbb{M} \to \mathbb{R}$ is strongly differentiable
  at a positive finite measure $\Lambda$
  satisfying~\eqref{eq:levy_constraints} and possess a gradient function
  $\nabla L(x;\Lambda)$. If such $\Lambda$ provides a local minimum of $L$ over
  $\mathbb{M}_{+} \cap H^{-1}(C)$, then
    \begin{equation}\label{eq:KT}
    \begin{cases}
      \nabla L (x;\Lambda) \ge 0 \quad \text{for all}\ x \in \mathbb{R}\setminus\{0\}, \\
      \nabla L (x;\Lambda) = 0 \quad \Lambda-a.e. \\
    \end{cases}
  \end{equation}
\end{theorem}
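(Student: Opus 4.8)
The plan is to treat this as a first-order necessary condition for minimisation over the convex cone $\M_+\cap H^{-1}(C)$, which is the set of non-negative finite measures carrying no atom at the origin. This set is stable under two elementary moves: adding mass away from $0$, and shrinking $\Lambda$ towards the zero measure. At a local minimiser, displacing $\Lambda$ along any admissible direction cannot decrease $L$ to first order, so the differential $DL(\Lambda)[\nu]=\int\nabla L(x;\Lambda)\,\nu(\rd x)$ is non-negative for every such $\nu$. The whole proof consists of feeding two well-chosen families of directions into this inequality and reading off the two lines of~\eqref{eq:KT}.

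First I would obtain $\nabla L(x;\Lambda)\ge0$ for $x\ne0$ by perturbing with a point mass. Fix $x\ne0$ and $t>0$, set $\nu=\delta_x$, and note that $\Lambda+t\delta_x\in\M_+$ while $(\Lambda+t\delta_x)(\{0\})=\Lambda(\{0\})=0$, so the perturbed measure stays feasible; since $\|t\delta_x\|=t\to0$ it lies in the local-minimum neighbourhood once $t$ is small. Strong differentiability then gives $L(\Lambda+t\delta_x)-L(\Lambda)=t\,\nabla L(x;\Lambda)+o(t)\ge0$, and dividing by $t$ before letting $t\downarrow0$ yields $\nabla L(x;\Lambda)\ge0$. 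The restriction to $x\ne0$ is exactly the footprint of the constraint $H(\Lambda)\in C$: adding mass at the origin would make $(\Lambda+t\delta_0)(\{0\})=t>0$ and leave the feasible set, so nothing can be said about $\nabla L(0;\Lambda)$, and nothing need be said since $\Lambda$ puts no mass there.

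For the second line I would instead shrink the measure, taking $\nu=-\Lambda$ along the admissible family $(1-t)\Lambda\in\M_+\cap H^{-1}(C)$ for $t\in[0,1]$. The same reasoning gives $-\int\nabla L(x;\Lambda)\,\Lambda(\rd x)\ge0$, that is $\int\nabla L(x;\Lambda)\,\Lambda(\rd x)\le0$. But the first line already forces $\nabla L(\,\cdot\,;\Lambda)\ge0$ on $\R\setminus\{0\}$, and $\Lambda(\{0\})=0$, so $\int\nabla L(x;\Lambda)\,\Lambda(\rd x)\ge0$ as well; the two bounds give $\int\nabla L(x;\Lambda)\,\Lambda(\rd x)=0$. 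The proof then closes with the standard measure-theoretic fact that a non-negative function integrating to zero against $\Lambda$ must vanish $\Lambda$-almost everywhere, which is precisely $\nabla L(x;\Lambda)=0$ $\Lambda$-a.e.

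I do not expect a deep analytic obstacle, as the argument is short once the two perturbation families are chosen; the step most deserving of care is the interplay of the two constraints. One must verify that each perturbation genuinely remains in $\M_+\cap H^{-1}(C)$---the role played above by $\delta_x(\{0\})=0$ for $x\ne0$ and by $(1-t)\Lambda\ge0$---and, more conceptually, see why the constraint $H(\Lambda)\in C$ contributes no separate multiplier. The reason is that its only active part, $\Lambda(\{0\})=0$, is already built into non-negativity together with the exclusion of the origin: it is exactly what forbids the ``add mass at $0$'' direction and hence what narrows the first condition in~\eqref{eq:KT} from all of $\R$ down to $\R\setminus\{0\}$. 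Beyond this bookkeeping one needs only that the remainder in~\eqref{eq:defdif} is $o(t)$ along each ray and that $\nabla L(\,\cdot\,;\Lambda)$ is bounded, keeping every integral finite.
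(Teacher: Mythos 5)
Your proposal is correct, and its skeleton matches the paper's: the first condition comes from perturbing with point masses $\delta_x$, $x\neq 0$, and the second from shrinking the measure, both read through the first-order inequality $DL(\Lambda)[\nu]=\int\nabla L(x;\Lambda)\,\nu(\rd x)\ge 0$ for admissible directions $\nu$. The one genuine difference is in how the second line of~\eqref{eq:KT} is extracted. The paper feeds in the whole Borel-indexed family of directions $-\Lambda|_B=-\Lambda(\,\cdot\,\cap B)$, obtaining $\int_B\nabla L(x;\Lambda)\,\Lambda(\rd x)\le 0$ for every Borel $B$, hence $\nabla L\le 0$ $\Lambda$-a.e., and then intersects this with the pointwise inequality $\nabla L\ge 0$ from the first part. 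You use only the single direction $-\Lambda$ (i.e.\ $B=\R$), get $\int\nabla L(x;\Lambda)\,\Lambda(\rd x)\le 0$, and then close with the standard fact that a function which is $\Lambda$-a.e.\ non-negative (here by the first part together with $\Lambda(\{0\})=0$) and integrates to at most zero must vanish $\Lambda$-a.e. Both routes are rigorous; yours is slightly more economical, trading the family of restrictions for one global direction plus a measure-theoretic lemma. A second, structural difference: the paper first proves an abstract tangent-cone criterion ($DG(\eta)[\nu]\ge 0$ for all $\nu$ in the tangent cone, via a contradiction argument with sequences $t_n^{-1}(\eta_n-\eta)\to\nu$), and then verifies that $\delta_x$ and $-\Lambda|_B$ lie in that cone; you bypass this machinery by arguing directly along the rays $\Lambda+t\delta_x$ and $(1-t)\Lambda$, which is legitimate precisely because your chosen directions are feasible along genuine segments rather than only as limits, so nothing is lost.
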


\begin{proof}
First-order necessary criteria for constrained optimisation in a Banach
space can be derived in terms of tangent cones.
Let $\A$ be a subset of $\M$ and $\eta\in\A$. The \emph{tangent cone}
to $\A$ at $\eta$ is the following subset of $\M$:
\begin{displaymath}
  \T_{\A}(\eta) = \liminf_{t \downarrow 0} t^{-1}(\A-\eta).
\end{displaymath}
Recall that the $\liminf_n A_n$ for a family of subsets
$(A_n)$ in a normed
space is the set of the limits of all converging sequences $\{a_n\}$ such that
$a_n\in A_n$ for all $n$.  Equivalently, $\T_{\A}(\eta)$ is the closure of the set
of such $\nu\in\M$ for which there exists an $\epsilon=\epsilon(\nu)>0$ such that
$\eta + t\nu \in \A$ for all $0\leq t\leq\epsilon$.

By the definition of the tangent cone, if $\eta$ is a point of minimum
of a strongly differentiable function $G$ over a set $\A$ then one
must have
\begin{equation}\label{eq:3}
  DG(\eta)[\nu]\geq 0\quad \text{for all}\ \nu\in \T_{\A}(\eta).
\end{equation}
Indeed, assume that there exists $\nu\in \T_{\A}(\eta)$ such that
$DG(\eta)[\nu]:=-\epsilon<0$. Then there is a sequence of positive numbers
$t_n\downarrow 0$ and a sequence $\eta_n\in\A$ such that $\nu=\lim_n
t_n^{-1}(\eta_n-\eta)$ implying $\eta_n\to\eta$ because
$\|\eta-\eta_n\|=t_n(1+o(1))\|\nu\|\to0$. Since any bounded linear
operator is continuous, we also have
\begin{displaymath}
  DG(\eta)[\nu]=DG(\eta)[\lim_n t_n^{-1}(\eta_n-\eta)]
  =\lim_n t_n^{-1} DG(\eta)[\eta_n-\eta]=-\epsilon.
\end{displaymath}
Furthermore, by~\eqref{eq:defdif},
\begin{displaymath}
  DG(\eta)[\eta_n-\eta]=G(\eta_n)-G(\eta)+o(\|\eta-\eta_n\|)=G(\eta_n)-G(\eta)+o(t_n),
\end{displaymath}
thus
\begin{displaymath}
  G(\eta_n)-G(\eta)= -t_n\epsilon(1+o(1))<-t_n\epsilon/2
\end{displaymath}
for all sufficiently small $t_n$. Thus in any ball of $\eta$ there is
a $\eta_n\in\A$ such that $G(\eta_n)<G(\eta)$ so that $\eta$ is not a
point of a local minimum of $G$ over $\A$.

Next step is to find a sufficiently rich class of measures belonging
to the tangent cone to the set $\L:=\M_+\cap H^{-1}(C)$ of all possible
L\'evy measures. For this, notice that for any $\Lambda\in\L$, the Dirac
measure $\delta_x$ belongs to
$\T_{\L}(\Lambda)$ since $\Lambda+t\delta_x\in \L$ for any $t\geq0$ as
soon as $x\neq0$. Similarly, given any Borel $B\subset \R$, the
negative measure $-\Lambda|_B=-\Lambda(\,\cdot\,\cap B)$, which is the
restriction of $-\Lambda$ onto $B$, is also in the tangent cone
$\T_\L(\Lambda)$, because for any $0\leq t\leq 1$ we have
$\Lambda-t\Lambda_B\in\L$.

Since $\nabla G(x;\Lambda)$ is a gradient function, the necessary
condition \eqref{eq:3} becomes
\begin{displaymath}%\label{eq:4}
  \int \nabla G(x;\Lambda)\,\nu(\rd x)\geq 0\quad \text{for all}\
  \nu\in \T_{\L}(\Lambda) 
\end{displaymath}
and substituting $\nu=\delta_x$ above we immediately obtain the
inequality in~\eqref{eq:KT}. Finally, taking $\nu=-\Lambda_B$ yields
\begin{displaymath}
  \int_B \nabla G(x;\Lambda)\,\Lambda(\rd x)\leq 0.
\end{displaymath}
Since this is true for any Borel $B$, then $G(x;\Lambda)\leq0$
$\Lambda$-almost everywhere which, combined with the previous
inequality, gives the second identity  in~\eqref{eq:KT}.

\end{proof}

A rich class of functions of a measure represent the expectation of a
functionals of a Poisson process.

Let $\bN$ be the space of locally finite counting measures $\phi$ on a
Polish space $X$ which will be a subset of an Euclidean space in this
paper. Let $\mathcal{N}$ be the smallest $\sigma$-algebra which makes
all the mappings $\phi\mapsto \phi(B)\in\Z_+$ for $\phi\in\bN$ and
compact sets $B$ measurable. A Poisson point process with the
\emph{intensity measure} $\mu$ is a measurable mapping
$\Pi$ from some probability space into $[\bN,\mathcal{N}]$ such that
for any finite family of disjoint compact sets $B_1\thru B_k$, the
random variables $\Pi(B_1)\thru \Pi(B_k)$ are independent and each
$\Pi(B_i)$ following Poisson distribution with parameter
$\mu(B_i)$. We use notation $\Pi\sim\mathrm{PPP}(\mu)$. From the
definition, $\E \Pi(B)=\mu(B)$, this is why the parameter measure
$\mu$ of a Poisson process is indeed the intensity measure of this
point process. To emphasise the dependence of the distribution on
$\mu$, we write the expectation as $\E_\mu$ in the sequel.

Consider a measurable function $G \colon \bN \to \R$ and define the difference operator
\begin{displaymath}
  D_{z}G(\phi) := G(\phi + \delta_{z}) - G(\phi),\ \phi\in \bN.
\end{displaymath}
For the iterations of the difference operator
\begin{displaymath}
  D_{z_{1}, \ldots, z_{n}}G = D_{z_{n}}(D_{z_{1}, \ldots, z_{n-1}}G),
\end{displaymath}
and every tuple of points
$(z_{1}, \ldots, z_{n})\in X^n$, it can be checked that 
\begin{displaymath}
   D_{z_{1}, \ldots, z_{n}}G(\nu) = \sum_{J \subset \{1, 2,
     \ldots, n\}} (-1)^{n - |J|} G \big(\nu + \Sigma_{j \in J}
   \delta_{z_{j}} \big), 
\end{displaymath}
where $|J|$ stands for the cardinality of $J$.  Define
\begin{displaymath}
  T_\mu G(z_{1}, \ldots, z_{n}) := \E_\mu D_{z_{1}, \ldots, z_{n}}G(\Pi). 
\end{displaymath} 
Suppose that the functional $G$ is such that there exists a constant
$c > 0$ satisfying
\begin{displaymath}
  |G \big({\Sigma}_{j= 1}^{n} \delta_{z_{j}} \big)| \le c^{n}
\end{displaymath}
for all $n \ge 1$ and all $(z_{1}, \ldots z_{n})$. It was proved
in~\cite[Theorem 2.1]{MolchanovI2000} that in the case of finite
measures $\mu, \mu'$ if then expectation $\E_{\mu+\mu'} G(\Pi)$ exists
then
\begin{equation} \label{MZequation}
  \E_{\mu+\mu'} G(\Pi) =\E_{\mu} G(\Pi)+ \sum_{n= 1}^{\infty}
  \frac{1}{n!} \int _{X^n}T_{\mu}G(z_{1},\dots, z_{n})
  \mu'(\mathrm{d}z_{1}) \ldots \mu'(\mathrm{d}z_{n}). 
\end{equation}
Generalisations of this formula to infinite and signed measures for
square integrable functionals can be found in \cite{Last:14}. A finite
order expansion formula can be obtained by representing the
expectation above in the form
$\E_{\mu+\mu'} G(\Pi)=\E_{\mu} \E_{\mu'}[G(\Pi+\Pi')\cond \Pi]$ where
$\Pi$ and $\Pi'$ are independent Poisson processes with intensity
measures $\mu$ and $\mu'$, respectively, and
then applying the moment expansion formula by
\cite[Theorem~3.1]{BlaMerSch:97} to $G(\Pi+\Pi')$ viewed as a
functional of $\Pi'$ with a given $\Pi$. This will give
\begin{multline} \label{fexp}
  \E_{\mu+\mu'} G(\Pi) =\E_{\mu} G(\Pi)+ \sum_{n= 1}^{k}
  \frac{1}{n!} \int _{X^n}T_{\mu}G(z_{1},\dots, z_{n})
  \mu'(\mathrm{d}z_{1}) \ldots \mu'(\mathrm{d}z_{n}) \\+
  \frac{1}{(k+1)!} \int _{X^{k+1}}T_{\mu+\mu'}G(z_{1},\dots, z_{k+1})
  \mu'(\mathrm{d}z_{1}) \ldots \mu'(\mathrm{d}z_{k+1}). 
\end{multline}

\section{Gradients of ChF loss function}\label{SM1}
The ChF method of estimating the compounding distribution $\Lambda$ or
more generally, the tripplet $(a,\sigma,\Lambda)$ of
the infinite divisible distribution, is based on fitting the empirical
characteristic function. The corresponding loss function $\lchf$ is
given by \eqref{eq:lchf}. It is everywhere differentiable in the usual
sense with respect to the parameters $a, \sigma$ and in Fr\'{e}chet
sense with respect to measure $\Lambda$.  Aiming at the steepest
descent gradient method for obtaining its minimum, we compute in this
section the gradients of $\lchf$ in terms of the following functions
\begin{align*}
  q_{1}(\theta, x) &:= \cos(\theta x) - 1, \quad Q_{1}(\theta,
                     \Lambda) :=   \int q_{1}(\theta, x) \Lambda(\rd
                     x); \\  
  q_{2}(\theta, x) &:= \sin(\theta x) - \theta x \one_{\{|x|<\epsilon\}},
                     \quad Q_{2}(\theta, a,\Lambda) :=  a \theta +
                     \int q_{2}(\theta, x) \Lambda(\rd x). 
\end{align*}
Using this notation, the real and imaginary parts of an infinitely
divisible distributions characteristic function $\phi=\phi_1+i\phi_2$
can be written down as
\begin{align*}
  \phi_1(\theta; a, \sigma, \Lambda)&= e^{h \{ Q_{1}(\theta, \Lambda)
                                      - \sigma^{2} \theta^{2} / 2 \}}
                                      \cos \{hQ_{2}(\theta,a, \Lambda)
                                      \}, \\ 
  \phi_2(\theta; a, \sigma, \Lambda) &= e^{h \{ Q_{1}(\theta, \Lambda)
                                       - \sigma^{2} \theta^{2} / 2 \}}
                                       \sin \{ hQ_{2}(\theta,
                                       a,\Lambda) \}. 
\end{align*}
After noticing that $\hat \phi_{n}=\hat \phi_{n,1}+i\hat \phi_{n,2}$, with
\begin{align*}
  \hat \phi_{n,1}(\theta)= \frac{1}{n} \sum_{j=1}^{n} \cos(\theta
  X_{j}), \qquad \hat \phi_{n,2}(\theta) = \frac{1}{n} \sum_{j=1}^{n}
  \sin(\theta X_{j}), 
\end{align*}
the loss functional $\lchf$ can be written as
\begin{equation*}
  \lchf(a, \sigma, \Lambda) = \int\big\{\phi_1(\theta; a, \sigma,
  \Lambda) - \hat \phi_{n,1}(\theta) \big\}^{2} \omega(\theta)\rd
  \theta 
  + \int\big\{\phi_2(\theta; a, \sigma, \Lambda) - \hat
  \phi_{n,2}(\theta) \big\}^{2} \omega(\theta)\rd \theta.  
\end{equation*}

From this representation, the following tree sets of formulae are
obtained in a straightforward way.
\begin{enumerate}
\item The partial derivative of the loss functional with respect to
  $a$ is equal to
\begin{align*}
  {\partial\over\partial a}\lchf (\theta; a, \sigma, \Lambda)
  &=  2 \int \{\phi_1(\theta; a, \sigma, \Lambda) -
    \hat\phi_{n,1}(\theta)\}{\partial\over\partial
    a}\phi_1(\theta; a, \sigma, \Lambda) 
    \omega(\theta) \rd \theta \\ 
  &\quad + 2 \int \{\phi_2(\theta; a, \sigma, \Lambda) -
    \hat\phi_{n,2}(\theta)\}{\partial\over\partial
    a}\phi_2(\theta; a, \sigma, \Lambda) 
    \omega(\theta)\rd \theta,
\end{align*}
where
\begin{align*}
  {\partial\over\partial a}
  \phi_1(\theta; a, \sigma, \Lambda)
  &  = - h\theta e^{h \{ Q_{1}(\theta, \Lambda) - \sigma^{2} \theta^{2} / 2 \}}                               \sin \{ hQ_{2}(\theta,a, \Lambda) \}, \\
  {\partial\over\partial a}\phi_2(\theta; a, \sigma, \Lambda)
  &= h\theta e^{h \{ Q_{1}(\theta, \Lambda) - \sigma^{2} \theta^{2} /
    2 \}} \cos \{ hQ_{2}(\theta, a,\Lambda) \}.
  \end{align*}

\item The partial derivative of the loss functional with respect to
  $\sigma$ is equal to
\begin{align*}
  {\partial\over\partial \sigma}\lchf (\theta; a, \sigma, \Lambda)
  &=  2 \int \{\phi_1(\theta; a, \sigma, \Lambda) - \hat\phi_{n,1}(\theta)\}
  {\partial\over\partial \sigma}\phi_1(\theta; a, \sigma, \Lambda) 
    \omega(\theta)\rd \theta \\ 
  &\quad + 2 \int \{\phi_2(\theta; a, \sigma, \Lambda) -
    \hat\phi_{n,2}(\theta)\} 
    {\partial\over\partial\sigma}\phi_2(\theta; a, \sigma, \Lambda)
    \omega(\theta)\rd \theta,
\end{align*}
where
\begin{align*}
    {\partial\over\partial \sigma}\phi_1(\theta; a, \sigma, \Lambda)
  &= - h\sigma \theta^{2} e^{h \{ Q_{1}(\theta, \Lambda) - \sigma^{2}
    \theta^{2} / 2 \}} \cos \{hQ_{2}(\theta, a,\Lambda) \}, \\
    {\partial\over\partial \sigma}\phi_2(\theta; a, \sigma, \Lambda)
  &= -h\sigma \theta^{2} e^{h \{ Q_{1}(\theta, \Lambda) - \sigma^{2}
    \theta^{2} / 2 \}} 
    \sin \{ hQ_{2}(\theta,a, \Lambda) \}.
  \end{align*}

\item Expression for the gradient function corresponding to the
  Fr\'echet derivative with respect to the measure $\Lambda$ is obtained using the
  Chain rule~\eqref{eq:chain}:
\begin{align*}
  \nabla \lchf(x; \Lambda) &=  2 \int\{\phi_1(\theta; a, \sigma,
                             \Lambda) -
                             \hat\phi_{n,1}(\theta)]\}\nabla\phi_1(\theta)[x,
                             \Lambda] \omega(\theta)\rd \theta \\ 
               & \quad + 2 \int \{\phi_2(\theta; a, \sigma, \Lambda) -
                 \hat\phi_{n,2}(\theta)\} \nabla\phi_2(\theta)[x,
                 \Lambda] \omega(\theta) \rd \theta, 
\end{align*}
where the gradients of
$\phi_i(\theta):=\phi_i(\theta; a, \sigma, \Lambda)$, $i=1,2$, with
respect to the measure $\Lambda$ are given by
\begin{align*}
  \nabla\phi_1(\theta)(x; \Lambda)  &= he^{h \{ Q_{1}(\theta, \Lambda)
                                     - \sigma^{2} \theta^{2} / 2 \}}
                                     \big\{ \cos \big(hQ_{2}(\theta,a,
                                     \Lambda)\big) q_{1}(\theta, x) -
                                     \sin\big(hQ_{2}(\theta,
                                     a,\Lambda)\big) q_{2}(\theta, x)
                                     \big\},  \\  
  \nabla\phi_2(\theta)(x; \Lambda)  &= he^{h \{ Q_{1}(\theta, \Lambda)
                                      - \sigma^{2} \theta^{2} / 2
                                      \}}\big\{ \sin
                                      \big(hQ_{2}(\theta,a,
                                      \Lambda)\big) q_{1}(\theta, x) +
                                      \cos\big(hQ_{2}(\theta,
                                      a,\Lambda)\big) q_{2}(\theta, x)
                                      \big\}.  
\end{align*}
\end{enumerate}

\section{Description of the CoF method}\label{SM3}
As it was alluded in the Introduction, the CoF method uses a representation
of the convolution as a function of the compounding measure. We now formulate the main
theoretical result of the paper on which the CoF method is based. 

\begin{theorem}\label{Main}
Let $W_t$ be a pure jump L\'evy process characterised by \eqref{La}
and $F(y)=F_h(y)$ be the cumulative distribution function of $W_h$. Then one has
  \begin{align}
    \begin{split}\label{eq:Main}
      F^{*2}(y) &= F(y)+\sum_{n=1}^{\infty} \frac{h^n}{n!}
      \int_{\R^n} U_{x_{1}, \ldots, x_{n}}F(y)
      \Lambda(\rd x_{1}) \ldots \Lambda(\rd x_{n}),
    \end{split}
  \end{align}
  where $U_{x_1}F(y)=F(y-x)-F(x)$ and
    \begin{equation}\label{U}
    U_{x_{1}, \ldots, x_{n}}F(y):=U_{x_{n}}(U_{x_1, \ldots,
      x_{n-1}}F(y)) = \sum_{J \subset \{1, 2, \ldots, n\}}
    (-1)^{i-|J|}F(y - \Sigma_{j \in J}x_{j}). 
  \end{equation}
 The sum above is taken over all the subsets $J$ of $\{1, 2, \ldots, n\}$
 including the empty set.
\end{theorem}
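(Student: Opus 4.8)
The plan is to recognise both $F$ and $F^{*2}$ as expectations of one and the same Poisson functional, evaluated at two different intensities, and then to read off \eqref{eq:Main} directly from the expansion formula \eqref{MZequation}. Since $W_t$ is a pure jump process with finite L\'evy measure, its jumps over $[0,h]$ form a Poisson point process $\Pi\sim\mathrm{PPP}(h\Lambda)$ on $X=\R$, and $W_h=\int x\,\Pi(\rd x)$. Setting $G(\phi):=\one\{\int x\,\phi(\rd x)\le y\}$ then gives $F(y)=\E_{h\Lambda}G(\Pi)$. For the twofold convolution $F^{*2}$, understood as the distribution function of the sum of two independent copies of $W_h$, I would take independent $\Pi^{(1)},\Pi^{(2)}\sim\mathrm{PPP}(h\Lambda)$; by the superposition theorem $\Pi^{(1)}+\Pi^{(2)}\sim\mathrm{PPP}(2h\Lambda)$, while the jump sums add, so that $W_h^{(1)}+W_h^{(2)}=\int x\,(\Pi^{(1)}+\Pi^{(2)})(\rd x)$ and hence $F^{*2}(y)=\E_{2h\Lambda}G(\Pi)$. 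This reframing -- convolution corresponds to doubling the intensity -- is the one genuinely creative step; everything after it is bookkeeping.

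With this in hand I would apply \eqref{MZequation} with $\mu=\mu'=h\Lambda$, so that $\mu+\mu'=2h\Lambda$. First I must check its hypotheses: $G$ is an indicator, so $|G(\sum_{j=1}^{n}\delta_{z_j})|\le 1=c^n$ with $c=1$ for every $n$ and every tuple, and $\E_{2h\Lambda}G(\Pi)$ exists since it is a probability. The formula then yields $F^{*2}(y)=F(y)+\sum_{n\ge1}\frac{1}{n!}\int_{\R^n}T_{h\Lambda}G(z_1,\dots,z_n)\,h\Lambda(\rd z_1)\cdots h\Lambda(\rd z_n)$, and extracting the $n$ factors of $h$ from the product reproduces the prefactor $h^n/n!$ in \eqref{eq:Main}.

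It remains to identify the kernel $T_{h\Lambda}G$ with $U_{x_1,\dots,x_n}F$. Using the inclusion--exclusion form of the iterated difference operator, $D_{z_1,\dots,z_n}G(\phi)=\sum_{J\subset\{1,\dots,n\}}(-1)^{n-|J|}G(\phi+\sum_{j\in J}\delta_{z_j})$, and noting that adding the atoms shifts the argument of $G$, I get $G(\Pi+\sum_{j\in J}\delta_{z_j})=\one\{W_h\le y-\sum_{j\in J}z_j\}$. Taking $\E_{h\Lambda}$ turns each summand into $F(y-\sum_{j\in J}z_j)$, whence $T_{h\Lambda}G(z_1,\dots,z_n)=\sum_{J}(-1)^{n-|J|}F(y-\sum_{j\in J}z_j)$, which is exactly $U_{z_1,\dots,z_n}F(y)$ as defined in \eqref{U}. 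Substituting this back gives \eqref{eq:Main}.

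As for difficulties, the algebra is routine and the convergence of the series comes for free from \eqref{MZequation} once the $c^n$ bound is verified, so there is no serious analytic obstacle. The only point needing genuine care is the standing assumption that $\Lambda$ has finite total mass: the quoted expansion is stated for finite $\mu,\mu'$, and the representation $W_h=\int x\,\Pi(\rd x)$ with $\Pi\in\bN$ presumes finitely many jumps. To obtain \eqref{eq:Main} for an infinite-activity pure jump process I would truncate $\Lambda$ to $\{|x|>\delta\}$, apply the finite case, and let $\delta\downarrow0$, using dominated convergence together with the extension of \eqref{MZequation} to infinite measures from \cite{Last:14}.
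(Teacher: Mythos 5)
Your proposal is correct and follows essentially the same route as the paper: both identify $F$ and $F^{*2}$ as expectations of the indicator functional $G$ under two Poisson intensities and then invoke the expansion \eqref{MZequation}, identifying $T G$ with $U_{x_1,\ldots,x_n}F$ via the shift property. The only (immaterial) difference is bookkeeping--you superpose two $\mathrm{PPP}(h\Lambda)$'s on $\R$ while the paper couples through a space-time $\mathrm{PPP}(\ell\times\Lambda)$ on $\R_+\times\R$ split into the strips $[0,h]$ and $[h,2h]$--and your closing remark on the finite-mass assumption is a sensible extra precaution that the paper's proof leaves implicit.
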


\begin{proof} 
  To prove the theorem, we use a coupling of $W_t$ with a Poisson
  process $\Pi$ on $\R_+\times \R$ driven by the intensity measure
  $\mu=\ell \times \Lambda$, where $\ell$ is the Lebesgue measure on
  $\R_+$.  For each realisation
  $\Pi = \Sigma_j \delta_{z_j}$ with $z_j=(t_{j}, x_{j})$, denote by $\Pi_t $ the
  restriction of $\Pi$ onto $[0, t] \times \R$.  Then, the L\'evy
  process can be represented as
  \begin{displaymath}
    W_{t} = \Sigma_{(t_j,x_{j} )\in \Pi_t} x_{j}=\int_0^t\int_{\R} x
    \Pi(\rd s\, \rd x).
  \end{displaymath}
  For a fixed arbitrary $y\in\R$ and a point configuration
  $\phi=\Sigma_j \delta_{(t_i,x_j)}$, consider a functional $G_y$
  defined by
  \begin{displaymath}
    G_y(\phi) = \one \Bigl\{\sum_{(t_j,x_{j} )\in \phi} x_j \le y\Bigr\}
  \end{displaymath}
  and notice that for any $z=(t,x)$,
  \begin{equation}\label{eq:addpoint}
    G_y(\phi+\delta_z)=\one \Bigl\{\sum_{(t_j,x_{j} )\in \phi} x_j \le y-x\Bigr\}=G_{y-x}(\phi).
  \end{equation}

  Clearly, the cumulative distribution function of $W_{h}$ can be
  expressed as an expectation 
  \begin{displaymath}
    F(y) = \P_\mu\Big\{\sum_{(t_j,x_{j} )\in \Pi_h} x_{j}\le y\Big\}=\E_\mu G_y(\Pi_h).
  \end{displaymath}
  Let $\mu'=[0,h]\times \Lambda$ and $\mu''=[h,2h]\times \Lambda$. Then
  \begin{displaymath}
    \E_{\mu'+\mu''} G_y(\Pi)=
    \P\{W_{2h} \le y\} = \P\{W_{h} + W_{h}'' \le y\} = F^{*2}(y),
  \end{displaymath}
  where $W_h''=W_{2h}-W_h$.
  Observe also that by iteration of \eqref{eq:addpoint},
  \begin{align*}
    T_{\mu'}G_y(z_{1}, \ldots, z_{n}) &= \E_{\mu'}
                                        D_{z_{1}, \ldots, z_{n}}G_y(\Pi)= \sum_{J \subset \{1, 2,
                                        \ldots, n\}} (-1)^{n - |J|} \E_{\mu'}
                                        G_y \big(\Pi + \Sigma_{j \in J}
                                        \delta_{z_{j}} \big)\\
                                      &= \sum_{J \subset \{1, 2,
                                        \ldots, n\}} (-1)^{n-|J|}F(y -
                                        \Sigma_{j \in
                                        J}x_{j})=U_{x_{1}, \ldots,
                                        x_{n}}F(y).  
  \end{align*}

  It remains now to apply expansion \eqref{MZequation} to complete the proof:
  \begin{align*} 
    F^{*2}(y)&= \E_{\mu'+\mu''} G_y(\Pi) \\
             & = F(y)+\sum_{n=1}^{\infty} \frac{1}{n!}
               \int_{(\R_+\times \R)^n} U_{x_{1}, \ldots, x_{n}}F(y)\,
               \mu''(\rd t_1\,\rd x_{1}) \ldots \mu''(\rd t_n\,\rd x_{n})\\
             &= F(y)+\sum_{n=1}^{\infty} \frac{h^n}{n!}
               \int_{\R^n} U_{x_{1}, \ldots, x_{n}}F(y)\,
               \Lambda(\rd x_{1}) \ldots \Lambda(\rd x_{n}).  
  \end{align*}
\end{proof}

The empirical convolution of a sample $(X_1\thru X_n)$,
\begin{equation}\label{eq:2FoldEstim}
  \hat F_{n}^{*2}(y) := \frac{1}{\binom{n}{2}} \sum_{1\le i<j\le n}
  \one\{X_i + X_j \le y \}.
\end{equation}
is an unbiased and consistent estimator of $F^{*2}(x)$,
see~\cite{Frees1986}.

%Given the practical situation when the measure $\Lambda$ is finite,
The CoF-method looks for a finite measure $\Lambda$ that minimises the
following loss function
  \begin{equation}\label{eq:lcof}
    \lcof^{(k)}(\Lambda) = \int\Big\{ 
    \hat F(y)+\sum_{i = 1}^{k} \frac{h^{i}}{i!} \int_{\mathbb{R}^{i}}
    U_{x_{1}, \ldots, x_{i}} \hat F_{n}(y) \Lambda(\rd x_{1}) \ldots
    \Lambda(\rd x_{i}) -\hat F_{n}^{*2}(y)\Big\}^{2} \omega(y)\rd y.
\end{equation}

%\begin{remark}\label{rem:accuracy}
%  Compared expansion to~\eqref{eq:Main}, 
The infinite sum in~\eqref{eq:Main} is truncated to $k$ terms
in~\eqref{eq:lcof} for computational reasons. The error introduced by
the truncation can be accurately estimated by bounding the remainder term in the
finite expansion formula~\eqref{fexp}. Alternatively, turning
to~\eqref{eq:Main} and using $0\leq F(y)\leq 1$, 
% then bounding all positive terms by 1 and all
% negative by 0, 
we obtain that
$|U_{x_{1}, \ldots, x_{i}}F(y)|\leq 2^{i-1}$ for all $i=1,2,\dots$,
yielding
% Transforming it first
%   to the form involving operators $U_{x_{1}, \ldots, x_{k+1}}$ acting
%   now on the c.d.f.\ $F_{2h}$, it is enough to
%   observe that since $0\leq F_{2h}(y)\leq 1$, then bounding all
%   positive terms by 1 and all negative by 0, we obtain
%   $|U_{x_{1}, \ldots, x_{i}}F_{2h}(y)|\leq 2^{i-1}$ for all
%   $i=1,2,\dots$, yielding
  % \begin{displaymath}
  %   \Big|\frac{(2h)^{k+1}}{(k+1)!}
  %   \int_{\R^{k+1}} U_{x_{1}, \ldots, x_{k+1}}F_{2h}(y) \Lambda(\rd x_{1}) \ldots
  %   \Lambda(\rd x_{k+1})\Big|\leq \frac{(4h\|\Lambda\|)^{k+1}}{2(k+1)!}.
  % \end{displaymath}
\begin{displaymath}
  \Big|\frac{h^{k+1}}{(k+1)!}
  \int_{\R^{k+1}} U_{x_{1}, \ldots, x_{k+1}}F(y) \Lambda(\rd x_{1}) \ldots
  \Lambda(\rd x_{k+1})\Big|\leq
  \frac{1}{2}\sum_{i=k+1}^\infty\frac{(2h\|\Lambda\|)^{i}}{i!}.
\end{displaymath}
Notice that the upper bound corresponds to a half the distribution
tail $\P\{Z\geq k+1\}$ of a Poisson random variable, say
$Z\sim \Po(2h\|\Lambda\|)$.  Thus, to have a good estimate with this
method, one should either calibrate the time step $h$ (if the data are
coming from the discretisation of a L\'evy process trajectory) or to
use higher $k$ to make the remainder term small enough. For instance,
for the horse kick data considered in Introduction~\ref{sec:intro},
$h=1$ and $\|\Lambda\|=0.61$. The resulting error bounds for $k=1,2,3$
are 0.172, 0.062 and 0.017, respectively, which shows that $k=3$ is
rather adequate cutoff for this data. Since $h\|\Lambda\|$ is the mean
number of jumps in the strip $[0,h]\times\R$, in practice one should
aim to choose $h$ so that to have only a few jumps with high
probability. If, on the contrary, the number of jumps is high, their
sum by the Central Limit theorem would be close to the limiting law
which, in the case of a finite variance of jumps, is Normal and so
depends on the first two moments only and not on the entire compounding
distribution. Therefore an effective estimation of
$\Lambda/\|\Lambda\|$ is impossible in this case, see \cite{duval2014}
for a related discussion.
%\end{remark}

As with the ChF method, the CoF algorithm relies on the steepest descent
approach. The needed gradient function has the form
  \begin{align*}
    \nabla &\lcof^{(k)} (x; \Lambda) \\
    &= 2h \int \Big\{ \hat F_n(y) + \sum_{i = 1}^{k} \frac{h^{i}}{i!}
      \int_{\mathbb{R}^{i}} U_{x_{1}, \ldots, x_{i}} \hat F_{n}(y)
      \Lambda(\rd x_{1}) \ldots \Lambda(\rd x_{i}) -\hat
      F_{n}^{*2}(y)\Big\} \Gamma(y,x,\Lambda) \omega(y)\rd y, 
       \end{align*}
       where
    \begin{align*}
 \Gamma(y,x,\Lambda)=\sum_{j= 0}^{k-1} \frac{h^{j}}{j!}
      \int_{\mathbb{R}^{j}} (U_{x, x_{1}, \ldots, x_{j}} \hat
      F_{n})(y) \Lambda(\rd x_{1}) \ldots \Lambda(\rd x_{j}).
          \end{align*}
 
      This formula follows from the Chain rule~\eqref{eq:chain} and the equality
   \begin{align*}
 \nabla \Big(\sum_{j= 1}^{k} \frac{h^{j}}{j!}
      \int_{\mathbb{R}^{j}} &(U_{x_{1}, \ldots, x_{j}} \hat
      F_{n})(y) \Lambda(\rd x_{1}) \ldots \Lambda(\rd x_{j}) \Big) (x; \Lambda)\\
      &= h\sum_{j= 0}^{k-1} \frac{h^{j}}{j!}
      \int_{\mathbb{R}^{j}} (U_{x, x_{1}, \ldots, x_{j}} \hat
      F_{n})(y) \Lambda(\rd x_{1}) \ldots \Lambda(\rd x_{j}).
          \end{align*}
          To justify the last identity, it suffices to see that for
          any integrable symmetric function $u(x_{1}, \ldots,x_{j}) $
          of $j\ge1$ variables,
 \begin{align*}
 \nabla \Big(
      \int_{\mathbb{R}^{j}} u(x_{1}, \ldots,x_{j}) &\Lambda(\rd x_{1})
                                                     \ldots\Lambda(\rd
                                                     x_{j}) \Big) (x;
                                                     \Lambda)\\ 
      &= j\int _{\mathbb{R}^{j-1}}u(x,x_{1},\ldots,x_{j-1})
        \Lambda(\rd x_{1}) \ldots \Lambda(\rd x_{j- 1}), 
          \end{align*}
which holds due to 
 \begin{align*}
      \int_{\mathbb{R}^{j}} &u(x_{1}, \ldots,x_{j}) (\Lambda+\nu)(\rd
                              x_{1}) \ldots(\Lambda+\nu)(\rd x_{j}) - 
      \int_{\mathbb{R}^{j}} u(x_{1}, \ldots,x_{j}) \Lambda(\rd x_{1})
                              \ldots\Lambda(\rd x_{j}) \\ 
      &= \sum_{k=1}^j\int _{\mathbb{R}^{j}}u(x_{1},\ldots,x_{j})
        \Lambda(\rd x_{1}) \ldots \Lambda(\rd x_{k- 1})\nu(\rd
        x_k)\Lambda(\rd x_{k+1})\ldots \Lambda(\rd x_{j})+o(\|\nu\|)\\ 
      &= j\int _{\mathbb{R}^{j}}u(x,x_{1},\ldots,x_{j-1})\nu(\rd x)
        \Lambda(\rd x_{1}) \ldots \Lambda(\rd x_{j- 1})+o(\|\nu\|). 
          \end{align*}
 
\section{Algorithmic implementation of the steepest descent method}
\label{sec:implement}
In this section we describe the algorithm implementing the gradient
descent method, which was used to obtain our simulation results
presented in Section~\ref{Ssd}.

Recall that the principal optimisation problem has the form
\eqref{eq:1}, where the functional $L(\Lambda)$ is minimised over
$\Lambda \in \mathbb{M}_{+}$ subject to the constraints on being a
L\'evy measure. For computational purposes the measure
$\Lambda \in \mathbb{M}_{+}$ is replaced by its discrete approximation
which has a form of a linear combination
$\boldsymbol{\Lambda}=\Sigma_{i=1}^{l} \lambda_{i} \delta_{x_{i}}$ of
Dirac measures on a finite regular grid $x_{1}, \ldots, x_{l}\in\R$,
$x_{i + 1} =x_{i}+2\Delta$. Specifically, for a given measure
$\Lambda$, the atoms of $\boldsymbol{\Lambda}$ are given by
\begin{align}
  \lambda_{1} &:= \Lambda((-\infty, x_{1} + \Delta)), \notag\\
  \lambda_{i} &:= \Lambda( [x_{i} - \Delta, x_{i} + \Delta )), \quad
                \text{for } i = 2, \ldots, l - 1, \label{eq:discr}\\ 
  \lambda_{l} &:= \Lambda([x_{l} - \Delta, \infty)).  \notag
\end{align}
Clearly, the larger is $l$ and the finer is the grid
$\{x_{1}, \ldots, x_{l}\}$ the better is approximation, however, at a
higher computational cost.

Respectively, the discretised version of the gradient function $\nabla
L(x;\Lambda)$ is the vector
% Viewing the discretised measure as a vector
% $\boldsymbol{\Lambda} := (\lambda_{1}, \ldots, \lambda_{n})\in\R^{l}_+$, we can
% reformulate the minimisation problem in terms of an $l$-variate cost
% function $L\colon \R^{l}_+ \mapsto \R$. Assuming that $L$ is
% differentiable, its gradient at a point $\boldsymbol{\Lambda}$ is the
% vector
\begin{displaymath}
  \boldsymbol{g}=(g_1,\ldots, g_l),\quad g_{i} := \nabla L
  (x_i;\boldsymbol{\Lambda}), \quad i= 1, \ldots, l. 
\end{displaymath}
% We slightly abuse the notation here:
% $\nabla L (x_i;\boldsymbol{\Lambda})$ refers not to the gradient at
% the measure $\boldsymbol{\Lambda}$, but to the gradient function
% $\nabla L (x;\Lambda)$ evaluated at $x=x_i$ and
% $\Lambda=\boldsymbol{\Lambda}$.  
For example, the cost function
$L=L^{(1)}_{\rm CoF}$ with $\omega(y)\equiv 1$ has the gradient
\begin{align*}
  \nabla &\lcof^{(1)} (x; \Lambda) = 2h \int \Big\{ \hat F_n(y)  -\hat
           F_{n}^{*2}(y)+   \int  \hat F_{n}(y-z)    \Lambda(\rd z)
           \Big\} \hat F_{n}(y-x)\,\rd y. 
\end{align*}
The discretised gradient for this
example is the vector  $\boldsymbol{g}$ with the components
\begin{equation}
 \label{exa}
 g_i=2h \int \Big\{ \hat F_n(y)  -\hat  F_{n}^{*2}(y)+   \sum_{j=1}^l
 \hat F_{n}(y-x_j) \lambda_j \Big\} \hat F_{n}(y-x_i)\,\rd y,\quad
 i=1\thru l.
\end{equation}

Our main optimisation algorithm has the following structure:
\begin{algorithm}[H]
\caption{Steepest descent algorithm}
%\label{Fun:GoSteep}
\begin{algorithmic}[1]
\Require initial vector $\boldsymbol{\Lambda}$% and maximal number of steps $S$
\Function{GoSteep}{$\boldsymbol{\Lambda}$}
\State initialise the discretised gradient $\boldsymbol{g} \gets (\nabla
L(x_{1};\boldsymbol{\Lambda}), \ldots, \nabla
L(x_{l};\boldsymbol{\Lambda}))$ 
\While{\big( $\min_{i} g_{i} < - \tau_2$ \textbf{or} $\max_{\{i \colon
    \lambda_{i} > \tau_1\}}g_{i} > \tau_2$ \big)}
 %(\textbf{not} \Call{IsOptim}{$\boldsymbol{\Lambda}$,
% $\boldsymbol{g}$}) % \textbf{and} $(S > 0)$ \big) }
\State choose a favourable step size $\epsilon$ depending on $L$ and
$\boldsymbol{\Lambda}$ \State compute new vector
$\boldsymbol{\Lambda} \gets$ \Call{MakeStep}{$\epsilon$,
  $\boldsymbol{\Lambda}$, $\boldsymbol{g}$} \State compute gradient at
the new $\boldsymbol{\Lambda}$:
$\boldsymbol{g} \gets (\nabla L(x_{1};\boldsymbol{\Lambda}), \ldots,
\nabla L(x_{l};\boldsymbol{\Lambda}))$
%\State $S \gets S - 1$
\EndWhile
\State \Return{$\boldsymbol{\Lambda}$}
\EndFunction
\end{algorithmic}
\end{algorithm}
%The lines 2 and 6 employ the formula \eqref{exa} for the gradient.  
In the master algorithm description above, the line 3 uses the
necessary condition~\eqref{eq:KT} as a test condition for the main
cycle.  In the computer realisations we usually want to discard the
atoms of a negligible size: for this purpose we use a zero-value
threshold parameter $\tau_1$. We use another threshold parameter
$\tau_2$ to decide when the coordinates of the gradient vector are
sufficiently small. For the examples considered in the next section,
we typically used the following values: $\omega\equiv 1$, $\tau_1=10^{-2}$ and
$\tau_2=10^{-6}$. The key {\sc MakeStep} subroutine, mentioned on line
5, is described below. It calculates the admissible steepest direction
$\boldsymbol{\nu}^{*}$ of size $\|\boldsymbol{\nu}^{*}\|\leq \epsilon$
and returns an updated vector
$\boldsymbol{\Lambda} \leftarrow \boldsymbol{\Lambda} +
\boldsymbol{\nu}^{*}$.

\begin{algorithm}[H]
\caption{Algorithm for a steepest descent move %of size $\epsilon$
}
%\label{Fun:MakeStep}
\begin{algorithmic}[1]
\Require maximal step size $\epsilon$, current variable value $\boldsymbol{\Lambda}$ and current gradient value $\boldsymbol{g}$
\Function{MakeStep}{$\epsilon$, $\boldsymbol{\Lambda}$, $\boldsymbol{g}$}
\State initialise the optimal step $\boldsymbol{\nu^{*}} \gets \boldsymbol{0}$
\State initialise the running coordinate $i \gets 0$
\State initialise  the total mass available  $E \gets \epsilon$
\While{ (($E > 0$) \textbf{and} ($i \le l$)) }
\If{ $g_{i} > |g_{l}|$ }
\State $\nu_{i}^{*} \gets \max(-\lambda_{i}, -E)$
\State $E \gets E  - \nu_{i}^{*}$
\Else
\State $\nu_{l}^{*} \gets E$
\State $E \gets 0$
\EndIf
\State $i \gets i + 1$
\EndWhile
\State \Return{ $\boldsymbol{\Lambda} + \boldsymbol{\nu^{*}}$ }
\EndFunction
\end{algorithmic}
\end{algorithm}

The {\sc MakeStep} subroutine looks for a vector $\boldsymbol{\nu}^*$
which minimises the linear form $\sum_{i=1}^{l} g_{i} \nu_{i}$
appearing in the Taylor expansion
\begin{displaymath}
  L(\boldsymbol\Lambda+\boldsymbol\nu)-L(\boldsymbol\Lambda)=\sum_{i=1}^{l}
  g_{i} \nu_{i}+o(|\boldsymbol\nu|).   
\end{displaymath}
This minimisation is subject to the following linear constraints
\begin{align*}
 &\sum_{i = 1}^{l} |\nu_{i}| \le \epsilon,\\
 &\nu_i\ge -\lambda_i,\quad i=1,\ldots, l.
\end{align*}
The just described linear programming task has a straightforward
solution given below.

For simplicity we assume that $g_{1} \ge \ldots \ge g_{l}$.  Note that
this ordering can always be achieved by a permutation of the
components of the vector $\boldsymbol{g}$ and respectively, $\boldsymbol{\Lambda}$.
Assume also that the total mass of $\boldsymbol{\Lambda}$ is bigger than
the stepsize $\epsilon$. Define two indices
\begin{equation*}
  i_g = \max\{i \colon g_{i} \geq |g_{l}|\}, \quad \text{and} \quad
  i_{\epsilon} = \max\{i \colon \sum_{j = 1}^{i - 1} \lambda_{j} <
  \epsilon\}, \quad \epsilon > 0. 
\end{equation*}
If $i_{\epsilon} \le i_g$, then the coordinates of
$\boldsymbol{\nu}^{*}$ are given by
\begin{displaymath}
  \nu_{i}^{*} := 
\left\{
\begin{array}{ll}
 -\lambda_{i}& \text{for } i \le i_{\epsilon},    \\
\sum_{j = 1}^{i_{\epsilon} - 1} \lambda_{j} - \epsilon  & \text{for }  i = i_{\epsilon} + 1,     \\
0  & \text{for }  i \ge i_{\epsilon} + 2,   
\end{array}
\right.
\end{displaymath}
and if $i_{\epsilon} > i_g$,  then 
\begin{displaymath}
  \nu_{i}^{*} := 
\left\{
\begin{array}{ll}
 -\lambda_{i}, &  \text{for }  i \le i_{g},    \\
0  & \text{for }  i_g < i < l,     \\
 \epsilon - \sum_{j = 1}^{i_g} \lambda_{j}  & \text{for }  i = l.
\end{array}
\right.
\end{displaymath}

The presented algorithm is realised in the statistical computation environment
\texttt{R} (see \cite{R}) in the form of a library
\texttt{mesop} which is freely downloadable from one of the authors'
webpage.\footnote{\url{http://www.math.chalmers.se/~sergei/download.html}}

\section{Simulation results for a discrete L\'evy measure}\label{Ssd}

To illustrate the performance of our estimation methods we generated
samples of size $n=1000$ for compound Poisson processes driven by
different kinds of L\'evy measure $\Lambda$. For all examples in this
section, we implement three versions of the CoF with
$h=1$, $k=1,2,3$ and $\omega\equiv 1$. We also apply ChF using the estimate of CoF with
$k=1$. Observe that CoF with $k=1$ can be made particularly fast
because here we have a non-negative least squares optimisation
problem. 

\paragraph{ Poisson compounding distribution.}
Here we consider the simplest possible L\'evy measure
$\Lambda(dx)=\delta_1(dx)$ which corresponds to a standard Poisson
process with parameter 1. Since all the jumps are integer valued and
non-negative, it is logical take the non-negative integer grid for
possible atom positions of the discretised $\Lambda$. This is the
way we have done it for the horse kick data analysis. However, to test
the robustness of our methods, we took the grid
$\{0,\pm 1/4,\pm 2/4,\pm 3/4,\ldots\}$. As a result the estimated
measures might place some mass on non-integer points or even on
negative values of $x$ to compensate for inaccurately fitted
positive jumps. We have chosen to show on the graphs the discrepancies
between the estimated and the true measure. An important indicator of
the effectiveness of an estimation is the closeness of the total masses
$\|\hat\Lambda\|$ and $\|\Lambda\|$. For $\Lambda=\delta_1$, the probability to have
more than 3 jumps is approximately 0.02, therefore we expect that
$k=3$ would give an adequate estimate for this data. Indeed, the left
panel of Figure~\ref{fPois} demonstrates that the CoF with $k=3$ is
quite effective in detecting the jumps of the Poisson process compared
to $k=2$ and especially to $k=1$ which generate large discrepancies
both in atom sizes and in the total mass of the obtained measure. Observe
also the presence of artefact small atoms at large $x$ and even at
some non-integer locations. 

The right panel shows that a good alternative to a rather
computationally demanding CoF method with $k=3$, is a much faster
combined CoF--ChF method when $\hat{\Lambda}_1$ measure is used as the
initial measure in the ChF algorithm. The resulting measure
$\dhat{\Lambda}_1$ is almost idetical to $\hat{\Lambda}_3$, but also
has the total mass closer to the target value 1. The total variation
distances between the estimated measure and the theoretical one are
0.435, 0.084 and 0.053  for $k=1,2,3$, respectively. For the combined method it is
0.043 which is the best approximation in the total variation to the
original measure.

\begin{figure}[H]
\begin{subfigure}{.5\textwidth}
  \centering
  \includegraphics[width=.95\linewidth]{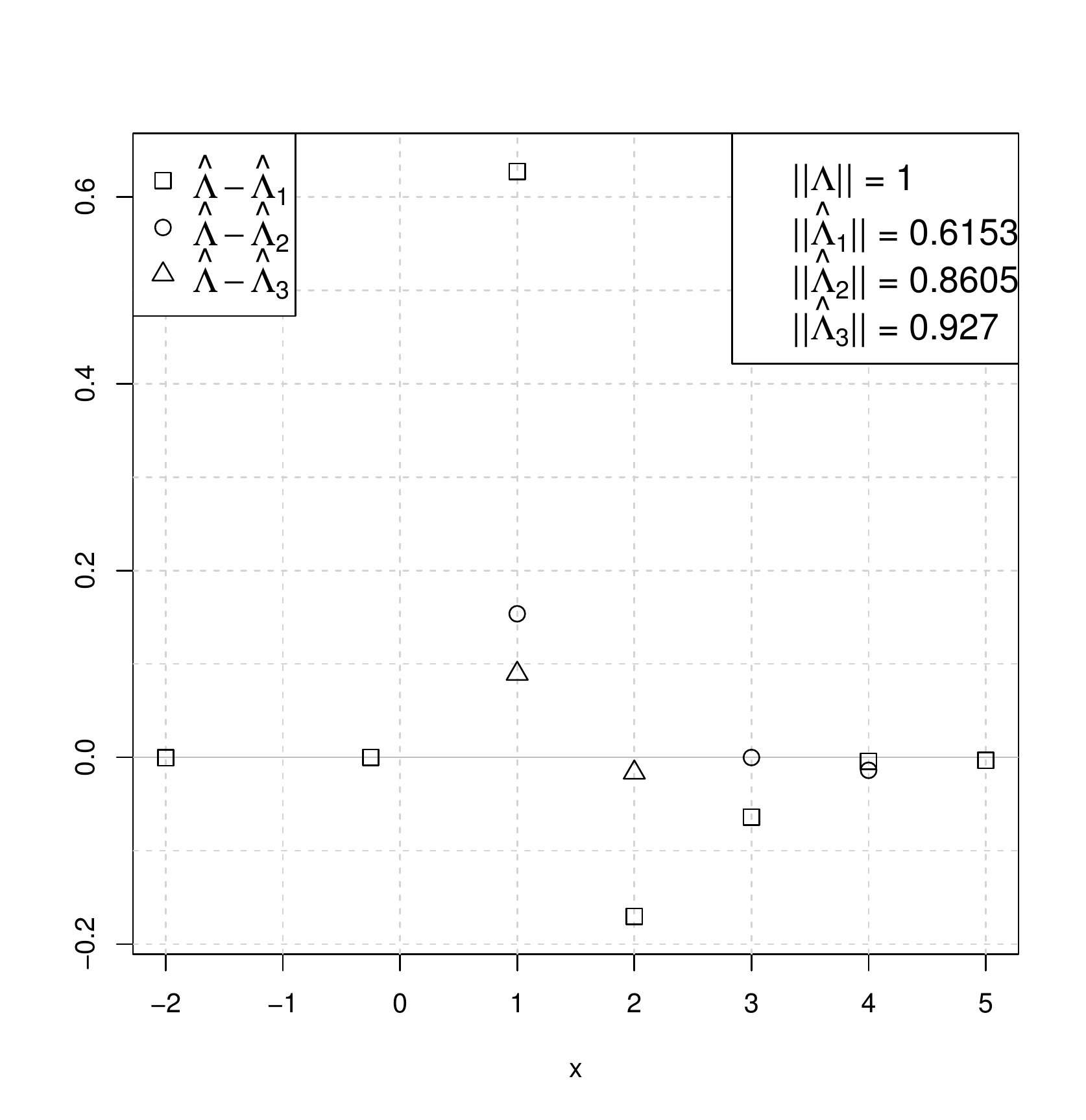}
  %\caption{Third Method}
\end{subfigure}%
\begin{subfigure}{.5\textwidth}
  \centering
  \includegraphics[width=.95\linewidth]{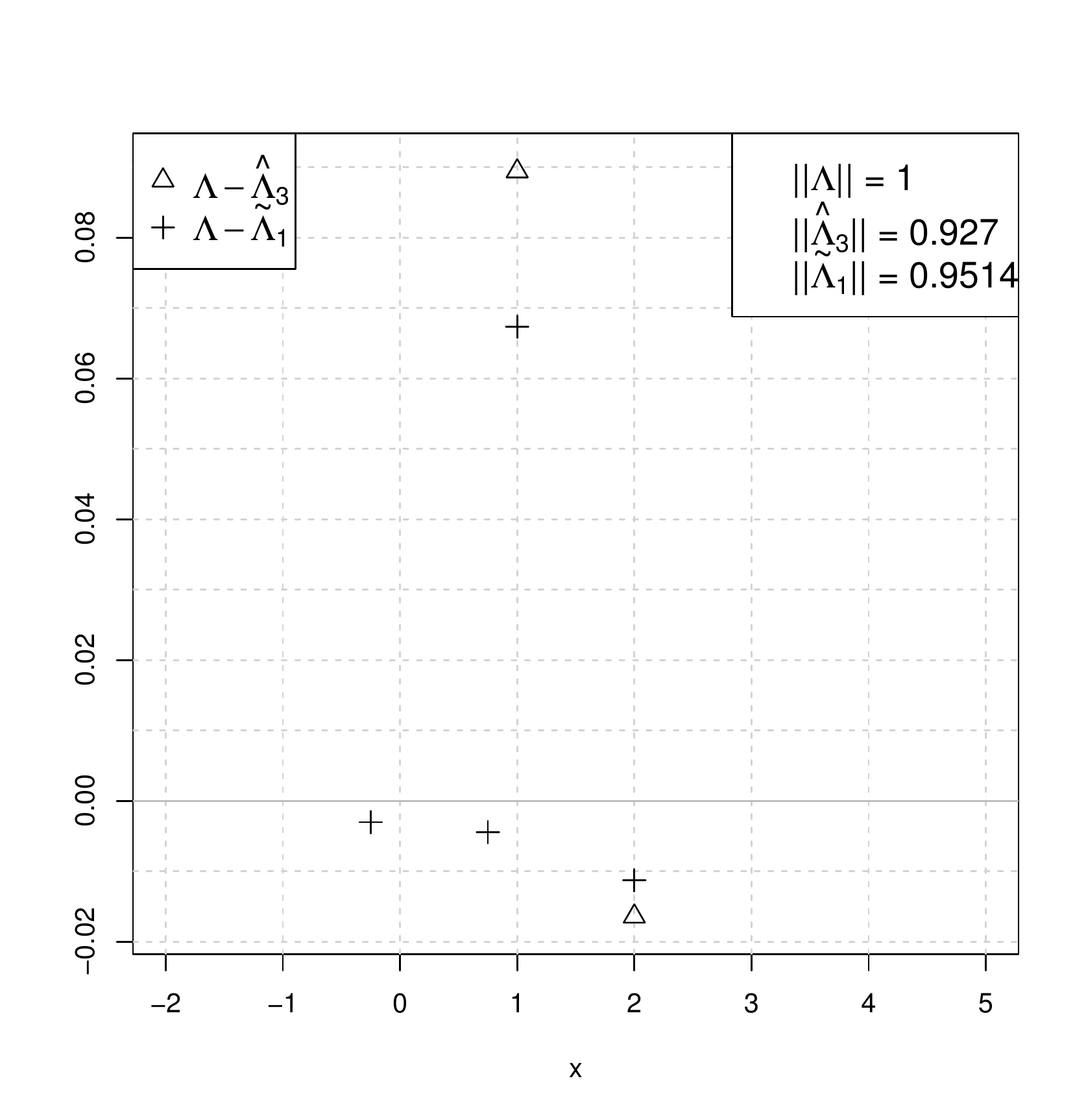}
 % \caption{First Method}
\end{subfigure}
\caption{Simulation results for a Poisson $\Po(1)$ compounding distribution
  corresponding to $\Lambda$ concentrated at point $1$,
  with total mass $1$.  Left panel: the differences between
  $\Lambda(\{x\})$ and their estimates $\hat\Lambda_k(\{x\})$ obtained by
  CoF with $k=1, 2, 3$. Zero values of the differences are not
  plotted. Right panel: comparison of $\hat\Lambda_3$ with
  $\dhat{\Lambda}_1$ obtained by ChF initiated at
  $\hat\Lambda_1$. Notice the drastic change in the vertical axis
  scale as we go from the left to the right panel.}
\label{fPois}
\end{figure}

\paragraph{Compounding distribution with positive and negative jumps.}
Figure \ref{fPois1} presents the results on a compound Poisson process
with jumps of sizes $-1,1,2$ having respective probabilities 0.2, 0.2
and 0.6, so that $\Lambda=0.2\delta_{-1}+0.2\delta_1+0.6\delta_2$. The
overall intensity of the jumps is again $\|\Lambda\|=1$. The presence
of negative jumps canceling positive jumps creates an additional
difficulty for the estimation task. This phenomenon explains why the
approximation obtained with $k=2$ is worse than with $k=1$ and $k=3$:
two jumps of sizes +1 and -1 sometimes cancel each other, which is
indistinguishable from no jumps case.  Moreover, -1 and
2 added together is the same as having a single size 1 jump. The
left panel confirms that going from $k=1$ through $k=2$ up to $k=3$
improves the performance of CoF although the computing time increases
drastically. The corresponding total variation distances of
$\hat{\Lambda}_k$ to the theoretical distribution are 0.3669, 0.6268
and 0.1558. The combined method gives the distance 0.0975 and
according to the right plot is again a clear winner in this case
too. It is also much faster.

\begin{figure}[H]
\begin{subfigure}{.5\textwidth}
  \centering
  \includegraphics[width=.95\linewidth]{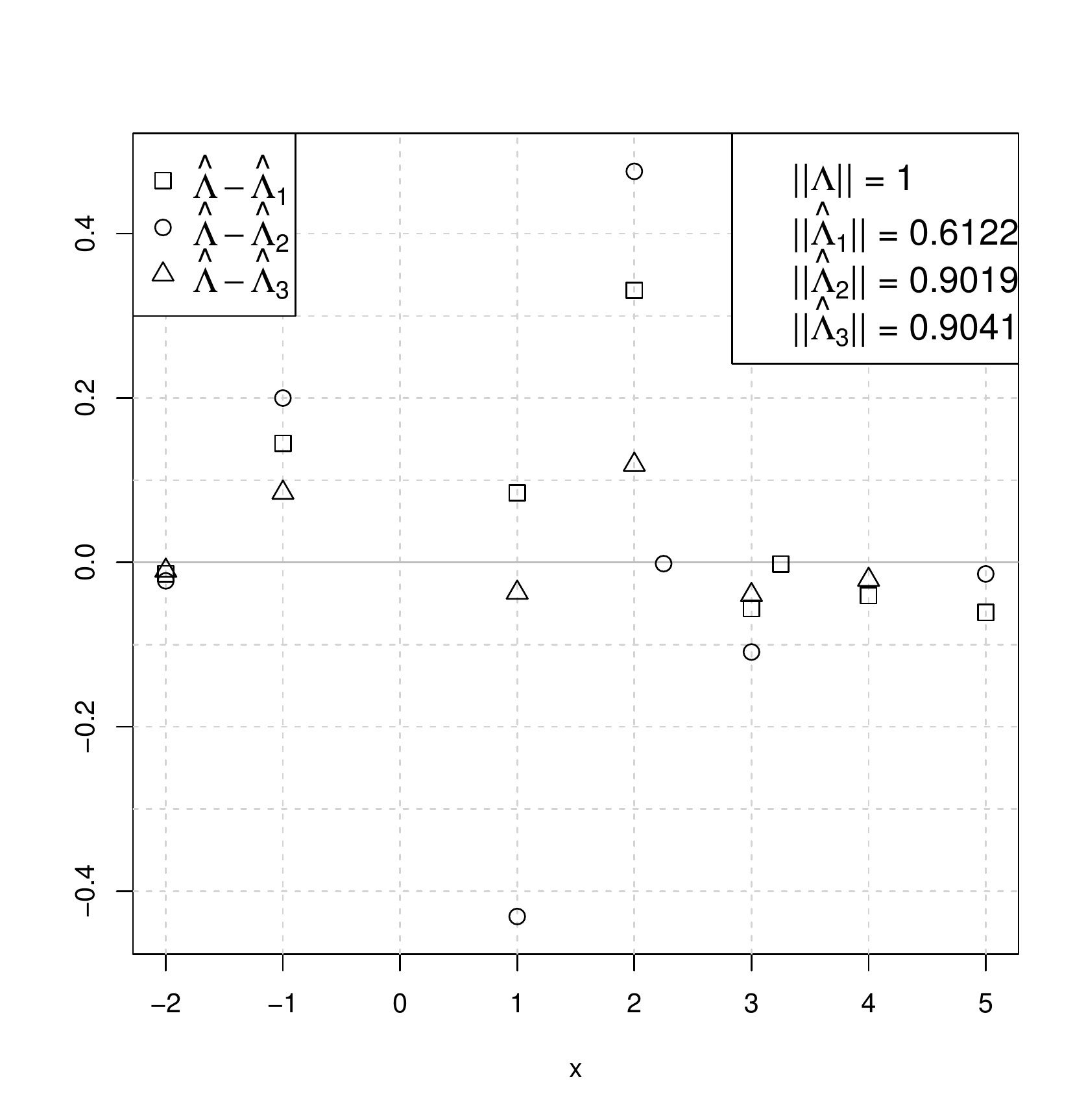}
%  \caption{Third Method}
\end{subfigure}%
\begin{subfigure}{.5\textwidth}
  \centering
  \includegraphics[width=.95\linewidth]{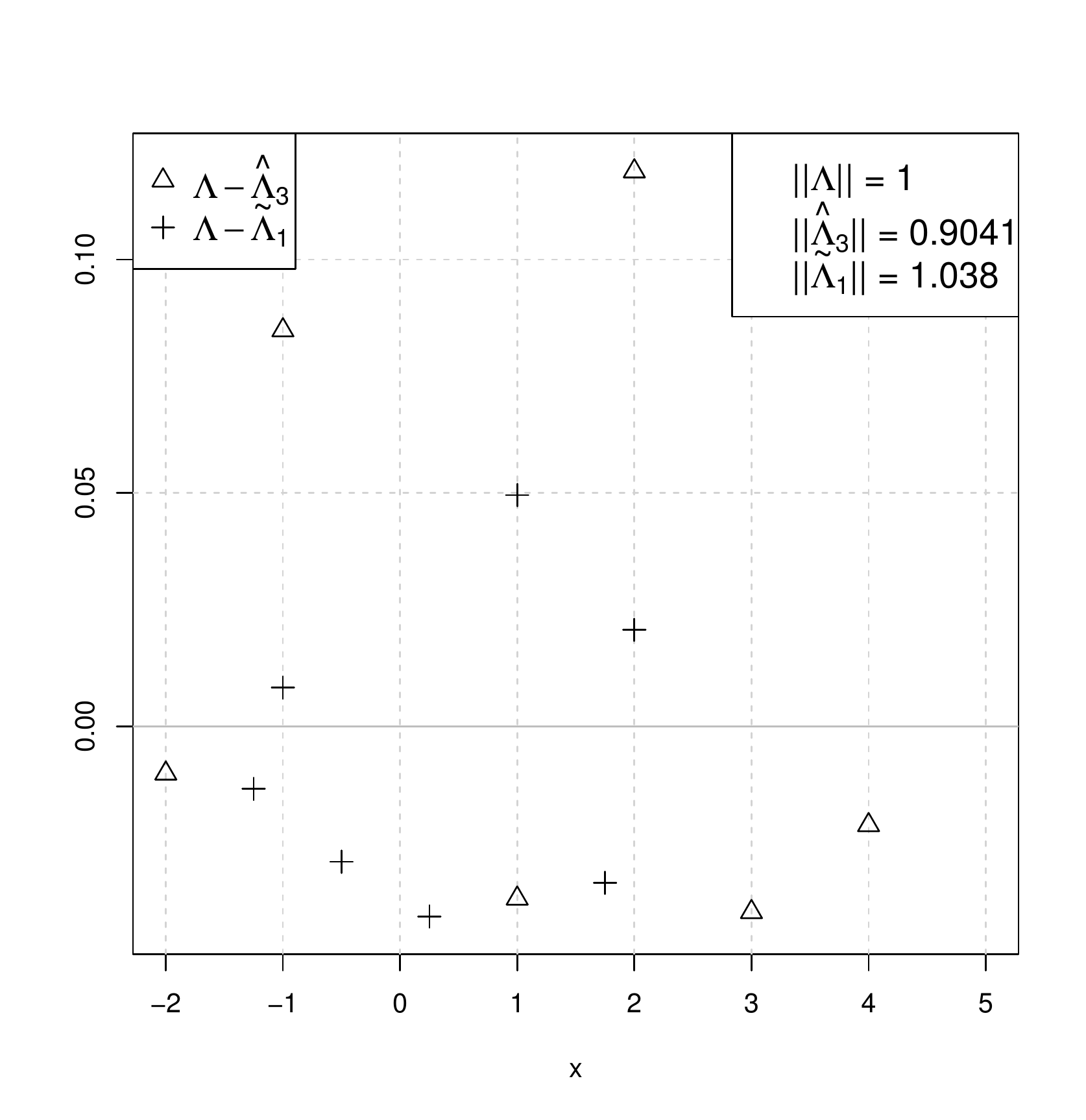}
%  \caption{First Method}
\end{subfigure}
\caption{Simulation results for
  $\Lambda=0.2\delta_{-1}+0.2\delta_1+0.6\delta_2$.  Left panel: the
  differences between $\Lambda(\{x\})$ and their estimates
  $\hat\Lambda_k(\{x\})$ obtained by CoF with $k=1, 2, 3$. Right panel:
  comparison of $\hat\Lambda_3$ with $\dhat{\Lambda}_1$ 
}
\label{fPois1}
\end{figure}

\begin{figure}[H]
\begin{subfigure}{.5\textwidth}
  \centering
  \includegraphics[width=.95\linewidth]{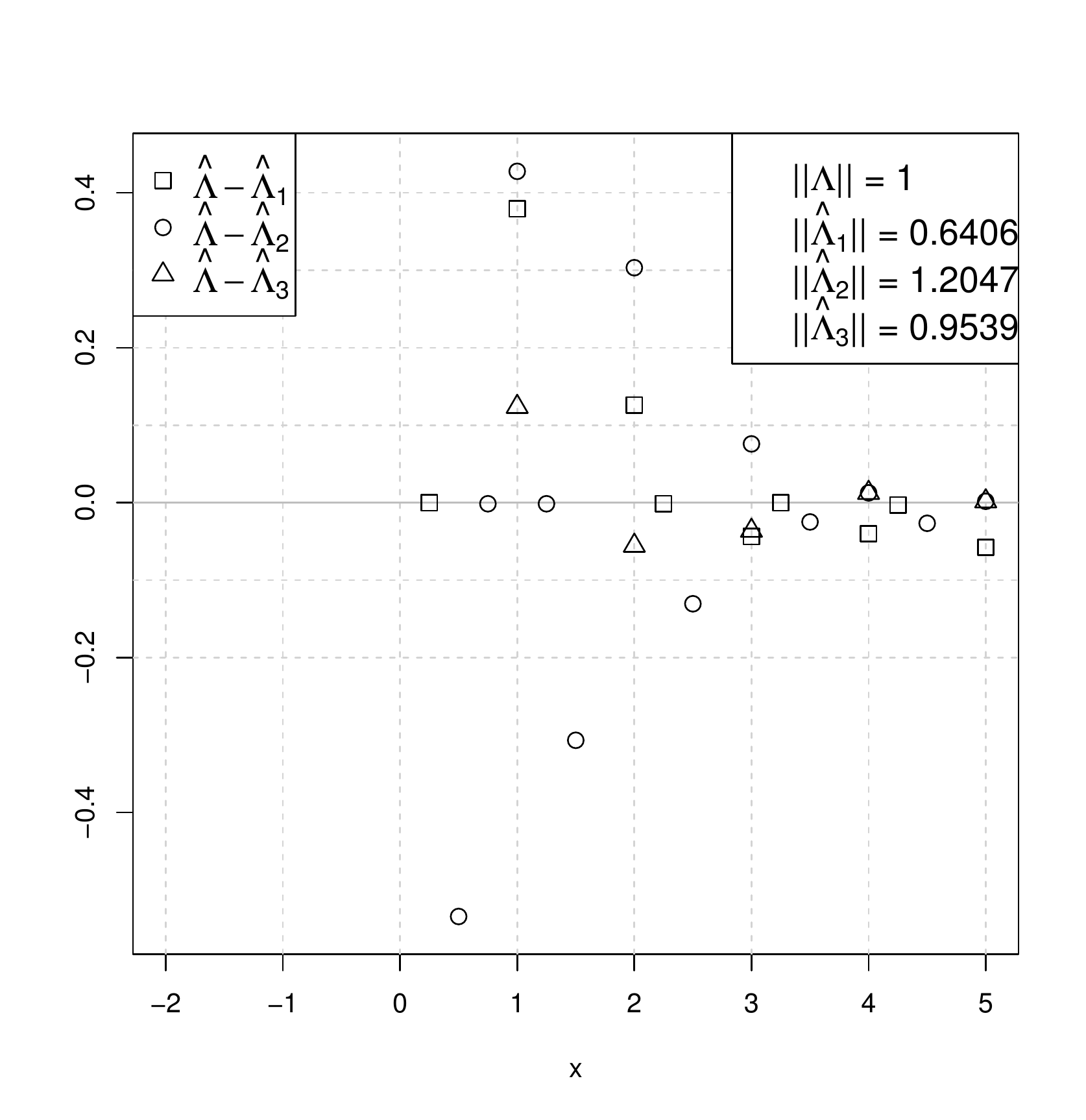}
 % \caption{Third Method}
\end{subfigure}%
\begin{subfigure}{.5\textwidth}
  \centering
  \includegraphics[width=.95\linewidth]{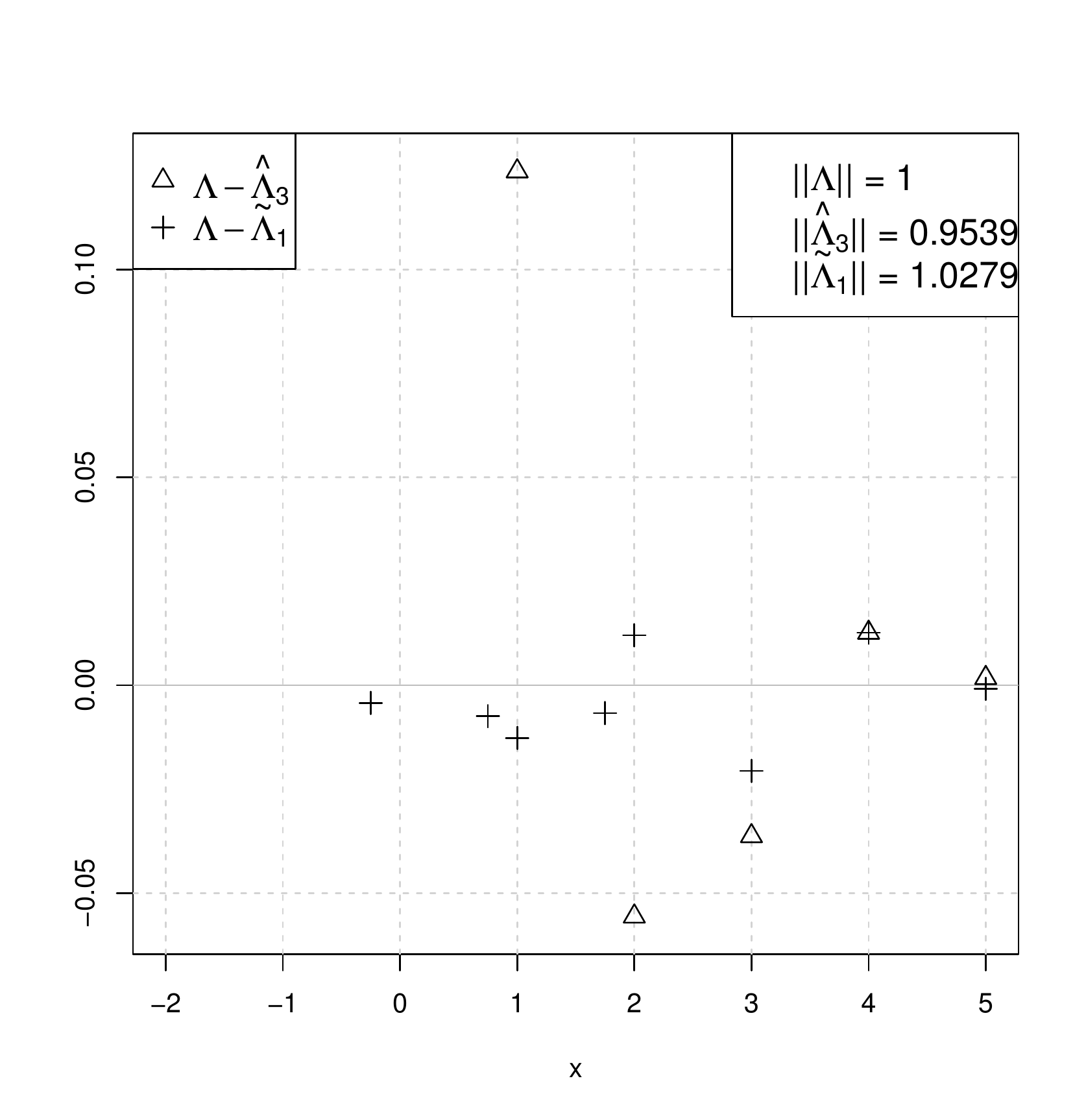}
 % \caption{First Method}
\end{subfigure}
\caption{Simulation results for a shifted Poisson distribution
  $\Lambda(\{x\})={e^{-1}/ (x-1)!}$ for $x=1, 2, \ldots$.  Left panel:
  the differences between $\Lambda(\{x\})$ and their estimates
  $\hat\Lambda_k(\{x\})$ obtained by CoF with $k=1, 2, 3$.  Right panel:
  comparison of $\hat\Lambda_3$ with $\dhat{\Lambda}_1$ obtained by
  ChF initiated at $\hat\Lambda_1$.}
 \label{poi}
\end{figure}

\paragraph{Unbounded compounding distribution.}
 On Figure \ref{poi} we present the simulation results for
a discrete measure $\Lambda$ having an infinite support $\N$.  For
the computation, we limit the support range for the measures
in question to the interval $x\in[-2,5]$.  As the left panel reveals,
also in this case the CoF method with $k=3$ gives a better approximation than
$k=1$ or $k=2$ (the total variation distances to the theoretical
distribution is 0.1150 compared to 0.3256 and  0.9235, respectively)
and the combined faster method gives even better estimate with
$\dtv(\dhat{\Lambda}_1,\Lambda)=0.0386$. Interestingly, $k=2$ was the
worst in terms of the total variation distance. We suspect that the
'pairing effect' may be responsible: the jumps are better fitted with
a single integer valued variable rather than the sum of two. The
algorithm may also got stuck in a local minimum producing small atoms
at non-integer positions.

Finally, we present simulation results for two
cases of continuously distributed jumps. % For
% computational convenience, we limit the support range for the measures
% in question to the interval $x\in[-5,5]$.
The continuous measures are replaced by their discretised versions given
by~\eqref{eq:discr}. In the examples below the grid size is $\Delta=0.25$.
% $\boldsymbol{\Lambda}$ on the grid with step is $\Delta=0.25$ so that
% $\boldsymbol{\Lambda}(\{x\})$ represents $\Lambda(x-0.125,x+0.125]$ with
% the last atom
% $\boldsymbol{\Lambda}(4.875)=\Lambda(4.75,+\infty)=e^{-4.75}$.

\paragraph{Continuous non-negative compounding distribution.}
Figure \ref{fig:exp} summarises our simulation results for the
compound Poisson distribution with the jump sizes following the
exponential $\Exp(1)$ distribution.  The left plot shows that also in
this case the accuracy of approximation increases with $k$. Observe
that the total variation distance
$\dtv(\hat{\Lambda}_3,\boldsymbol{\Lambda})= 0.0985$ is
comparable with the discretisation error:
$\dtv(\Lambda,\boldsymbol{\Lambda})=0.075$. A Gaussian kernel smoothed
version of $\hat{\Lambda}_3$ is presented at the right plot of Figure~\ref{fig:exp}. The
visible discrepancy for small values of $x$ is explained by the fact
that there were no sufficient number of really small jumps in the
simulated sample to give the algorithm sufficient grounds to put more
mass around 0. Interestingly, the combined algorithm produced a
measure with a smaller (compared to $\hat{\Lambda}_3$) value of the $\lchf$,
but a larger total variation distance from $\boldsymbol{\Lambda}$.
Optimisation in the space of measures usually tends to produce atomic
measures since these are boundary points of the typical constraint
sets in $\M$. Indeed, $\dhat{\Lambda}_1$ has smaller number of atoms than
$\boldsymbol{\Lambda}$ does and still it better approximates the
empirical characteristic function
of the sample. It shows that the case of optimisation in the class of
absolutely continuous measures should be analysed differently by
characterising their tangent cones and deriving the
corresponding steepest descent methods. Additional conditions on the
density must also be imposed, like Lipschitz kind of conditions, to
make the feasible set closed in the corresponding measure topology.
\begin{figure}[H]
\begin{subfigure}{.5\textwidth}
  \centering
  \includegraphics[width=.95\linewidth]{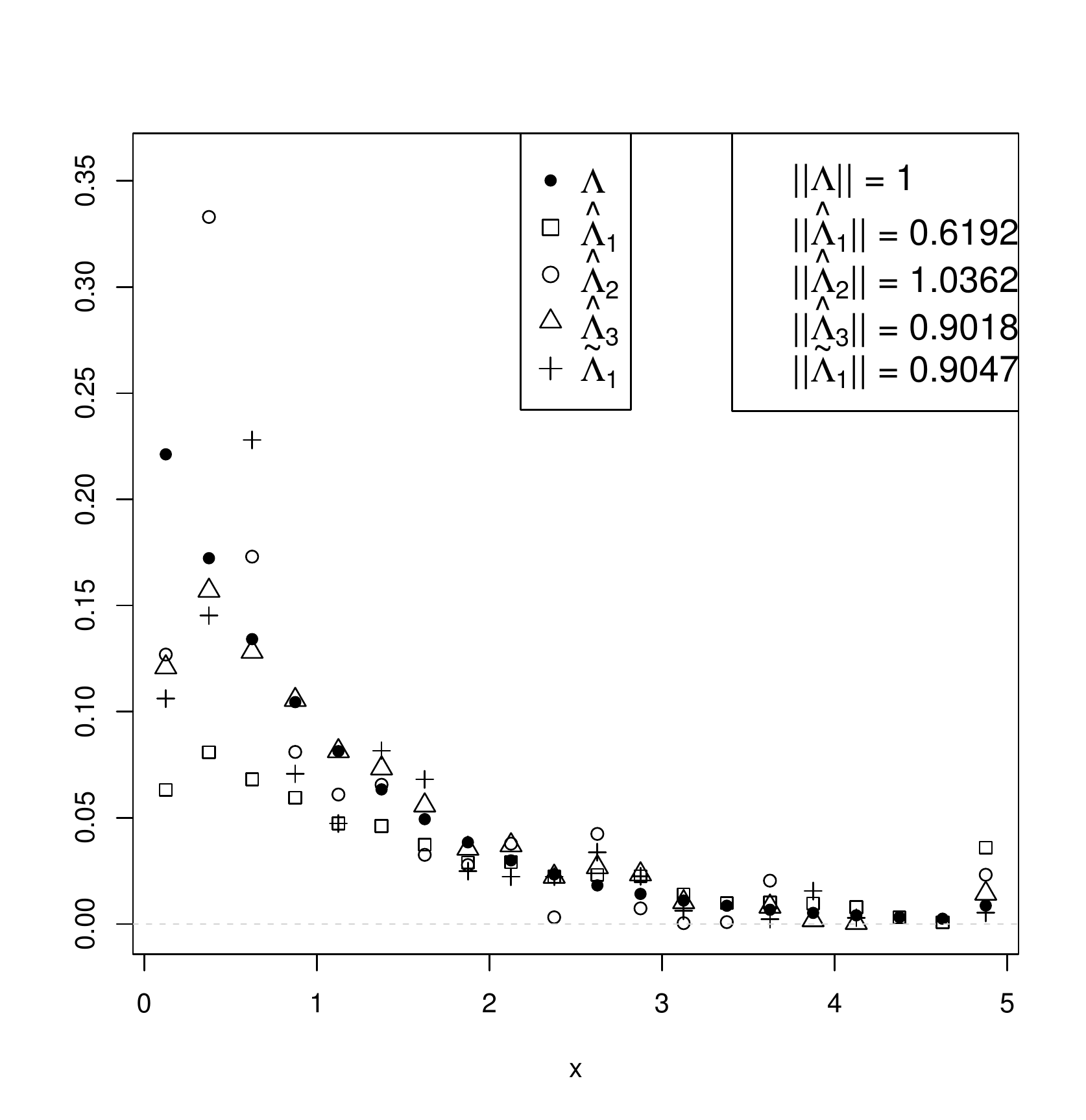}
%  \caption{Third Method}
\end{subfigure}%
\begin{subfigure}{.5\textwidth}
  \centering
  \includegraphics[width=.95\linewidth]{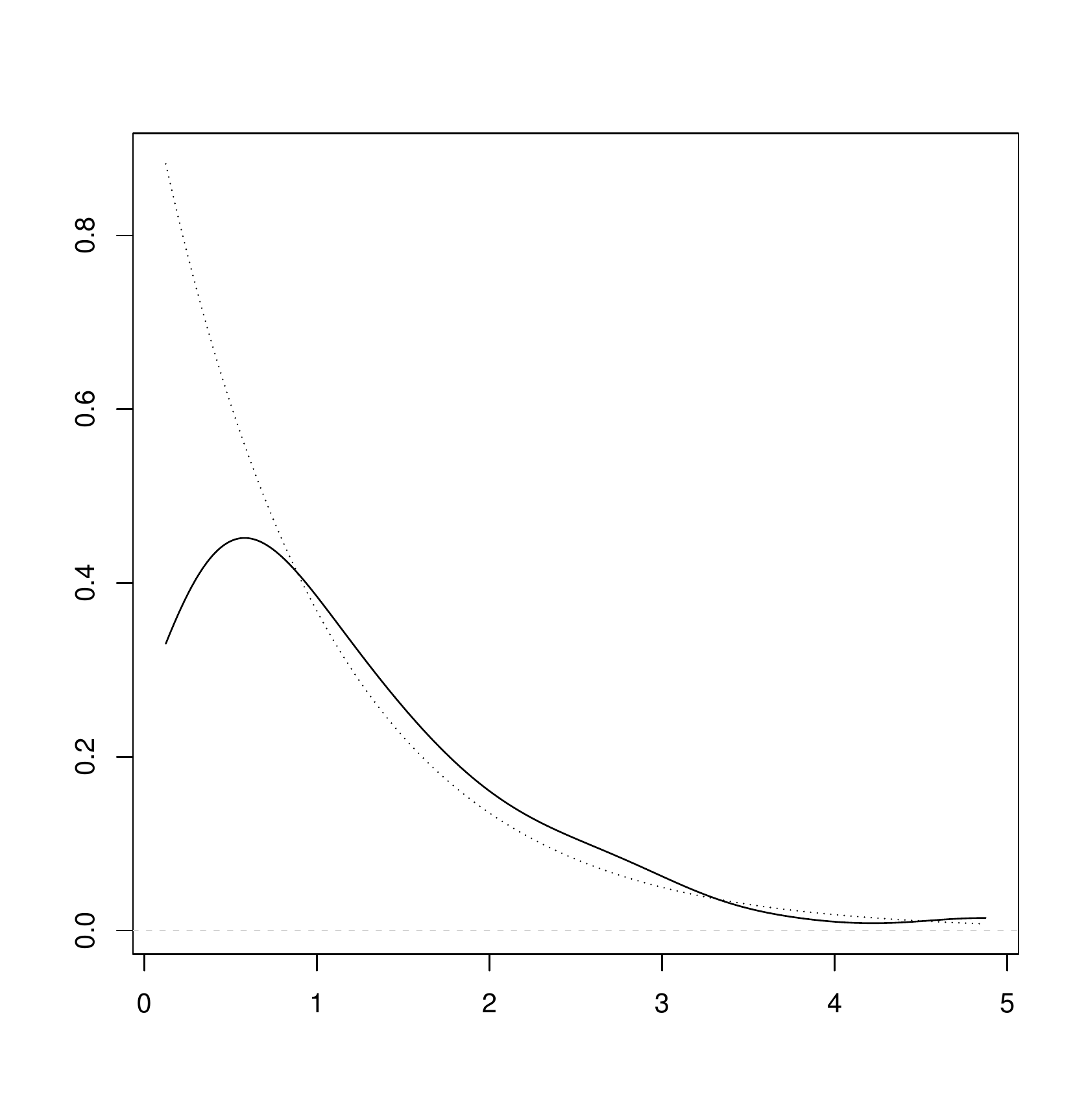}
%  \caption{First Method}
\end{subfigure}
\caption{Simulation results for a compound Poisson process with jump
  intensity 1 and jump sizes having an exponential distribution with
  parameter $1$. Left plot: obtained measures for various algorithms,
  the right plot: the theoretical exponential density and the smoothed version of
  $\dhat{\Lambda}_1$ measure.} 
\label{fig:exp}
\end{figure}

\paragraph{Gaussian compounding distribution.}
 Figure \ref{norm} takes up the important example of compound
Poisson processes with Gaussian jumps. Once again, the estimates
$\hat\Lambda_k$ improve as $k$ increases, and the combined method
gives an estimate similar to $\hat\Lambda_3$.

\begin{figure}[H]
\begin{subfigure}{.5\textwidth}
  \centering
  \includegraphics[width=.95\linewidth]{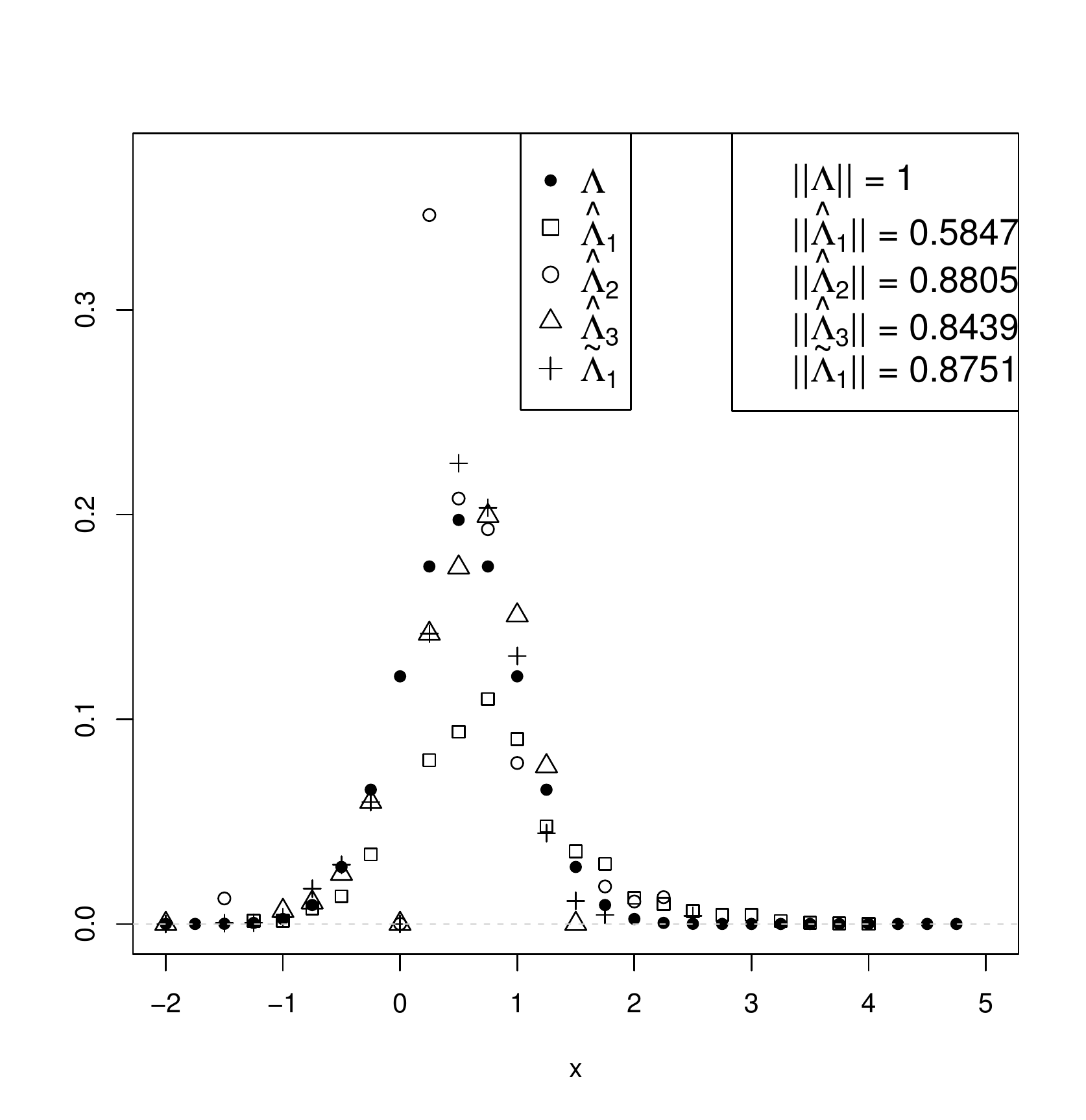}
%  \caption{Third Method}
\end{subfigure}%
\begin{subfigure}{.5\textwidth}
  \centering
  \includegraphics[width=.95\linewidth]{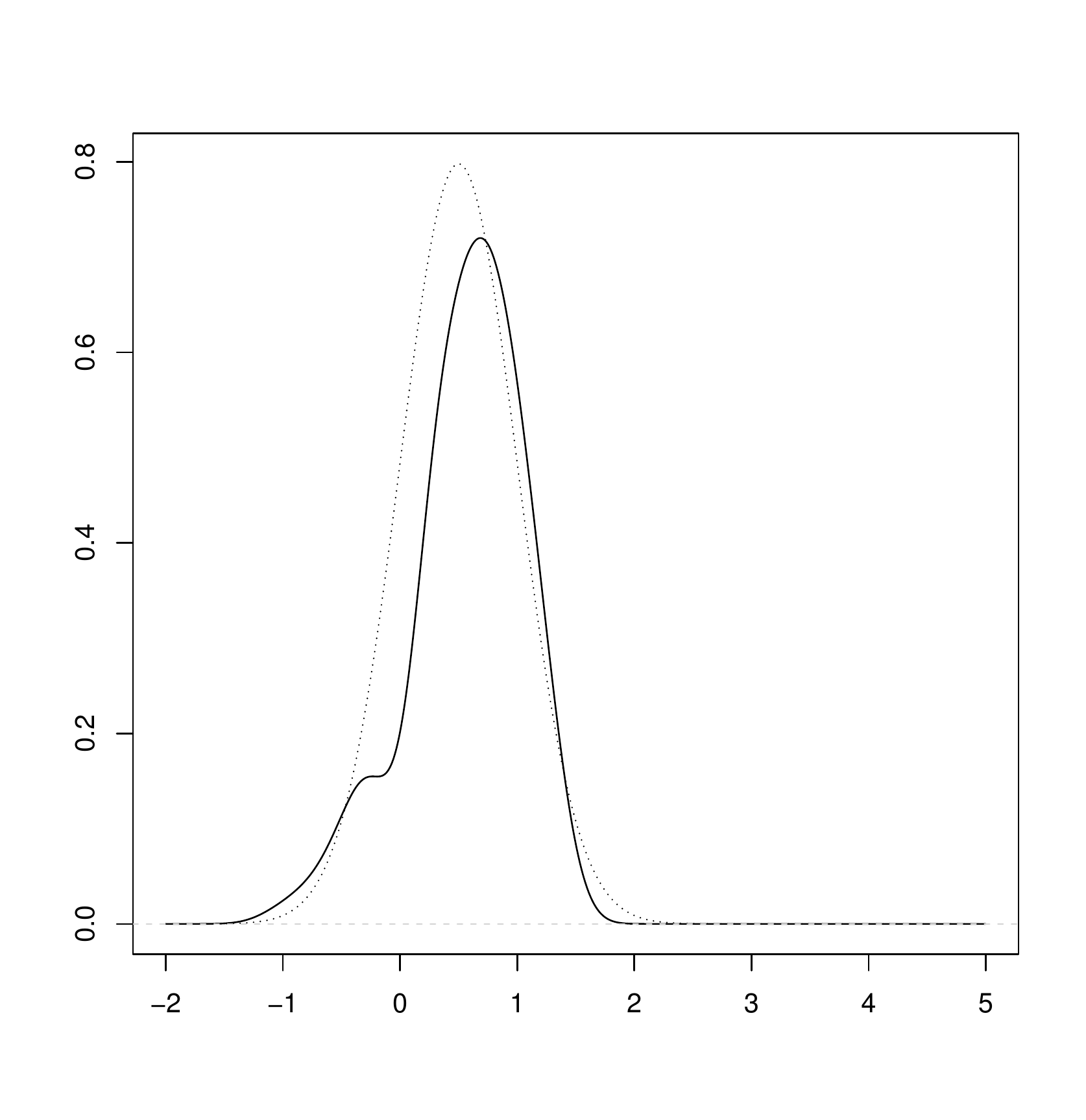}
%  \caption{First Method}
\end{subfigure}
\caption{Left plot: Estimated compounding measure for a simulated sample with
  jump sizes having the standard Normal distribution with mean 0.5
  and variance 0.25. Right plot: the theoretical Gaussian density and
  the smoothed version of $\hat{\Lambda}_3$ measure.}
\label{norm}
\end{figure}

\section{Discussion}
\label{sec:discussion}
In this paper we proposed and analysed new algorithms based on the
characteristic function fitting (ChF) and convoluted cumulative distribution function fitting
(CoF) for non-parametric inference of the compounding measure of a
pure-jump L\'evy process. The algorithms are based on the recently
developed variational analysis of functionals of measures and the
corresponding steepest descent methods for constraint optimisation on
the cone of measures. CoF methods are capable of producing very
accurate estimates, but at the expense of growing computational
complexity. The ChF method critically depends on the initial
approximation measure due to highly irregular behaviour of the
objective function. We have shown that the problems of convergence of
the ChF algorithms can often be effectively overcome by choosing the
sample measure (discretised to the grid) as the initial approximation
measure. However, a better alternative, as we demonstrated in the paper,
is to use the measure obtained by the simplest ($k=1$) CoF
algorithm. This combined CoF--ChF algorithm is fast and in majority of
cases produces a measure which is closest in the total variation to
the measure under estimation and thus this is our method of choice.

The practical experience we gained during various tests allows us to
conclude that the suggested methods are especially well suited for
estimation of discrete jump size distributions. They work well even
with jumps that take both positive and negative values not necessarily
belonging to a regular lattice, demonstrating a clear advantage over
the existing methods, see~\cite{Buchman2003}, \cite{Buchman2004}. Use
of our algorithms for continuous compounding distributions require
more trial and error in choosing the right discretisation grid and
smoothing procedures to produce good results which should be then
compared or complemented to the direct methods of the density
estimation like in~\cite{Es2007}, \cite{WatKul:03}.

\section*{Acknowledgements} 
This work was supported by Swedish Vetenskapr\aa det grant
no.~11254331. The authors are grateful to Ilya Molchanov for fruitful discussions.

% \bibliographystyle{unsrtnat}
% \bibliography{levy}

%%% Local Variables:
%%% mode: latex
%%% TeX-master: "LZS-arxiv"
%%% End:

\end{document}